\documentclass[11pt,fleqn]{article}
\usepackage{amssymb}
\usepackage{amsfonts}
\usepackage{amsmath}
\usepackage{amsthm}
\usepackage{graphicx}
\usepackage{epsfig}
\usepackage{psfrag}
\usepackage{color}
\usepackage{dsfont}
\bibliographystyle{plain}
\makeatletter
\xdef\@endgadget#1{{\unskip\nobreak\hfil\penalty50\hskip1em\hbox{}\nobreak
    \hfil#1\parfillskip=0pt\finalhyphendemerits=0\par}}
\def\@qedsymbol{${}_\blacksquare$}
\def\qed{\@endgadget{\@qedsymbol}}
\newtheorem{lemma}{Lemma}[section]
\newtheorem{theorem}[lemma]{Theorem}

\newtheorem{example}[lemma]{Example}
\newtheorem{definition}[lemma]{Definition}

\newtheorem{proposition}[lemma]{Proposition}
\newtheorem{remark}[lemma]{Remark}
\newcommand{\mR}{\mathbb{R}}

\newcommand{\ith}{i^{\tiny{\text{th}}}}
\newcommand{\jth}{j^{\tiny{\text{th}}}}

\newcommand{\Exp}{\mathrm{Exp}}
\newcommand{\Ln}{\mathrm{Ln}}
\newcommand{\mL}{\mathcal{L}}
\DeclareMathOperator{\im}{im} 
\DeclareMathOperator{\rank}{rank}

\DeclareMathOperator{\spa}{span}

\def\BibTeX{{\rm B\kern-.05em{\sc i\kern-.025em b}\kern-.08em
    T\kern-.1667em\lower.7ex\hbox{E}\kern-.125emX}}
    

\topmargin=-1.5cm  \oddsidemargin=-0.5cm \evensidemargin=-2cm 
\textheight=23.5cm   \textwidth=18cm

\title{\LARGE \bf A Graph-Theoretical Approach for \\ the Analysis and Model Reduction of \\Complex-Balanced Chemical Reaction Networks
}

\author{Shodhan Rao\thanks{Center for Systems Biology, University of Groningen, email: {\tt \small s.rao@umcg.nl}} \and Arjan van der Schaft\thanks{Johann Bernoulli Institute for Mathematics and Computer Science, University of Groningen, e-mail: {\tt\small A.J.van.der.Schaft@rug.nl}}\and Bayu Jayawardhana\thanks{Discrete Technology and Production Automation, University of Groningen, email: {\tt \small b.jayawardhana@rug.nl}} 
}

\begin{document}

\maketitle
\thispagestyle{empty}
\pagestyle{empty}

\begin{abstract}
In this paper we derive a compact mathematical formulation describing the dynamics of chemical reaction networks that are complex-balanced and are governed by mass action kinetics. 
The formulation is based on the graph of (substrate and product) complexes and the stoichiometric information of these complexes, and crucially uses a balanced weighted Laplacian matrix.
It is shown that this formulation leads to elegant methods for characterizing the space of all equilibria for complex-balanced networks and for deriving stability properties of such networks. We propose a method for model reduction of complex-balanced networks, which is similar to the Kron reduction method for electrical networks and involves the computation of Schur complements of the balanced weighted Laplacian matrix. \\

\noindent{\bf Keywords:}
Weighted Laplacian matrix, linkage classes, zero-deficiency networks, persistence conjecture, equilibria, Schur complement.
\end{abstract}

\maketitle

\section{Introduction}
The analysis and control of large-scale chemical reaction networks poses a main challenge to bio-engineering and systems biology. Chemical reaction networks involving several hundreds of chemical species and reactions are common in living cells \cite{Palsson}. The complexity of their dynamics is further increased by the fact that the chemical reaction rates are intrinsically {\it nonlinear}. In particular mass action kinetics, which is the most basic form of expressing the reaction rates, corresponds to differential equations which are polynomial in the concentrations of the chemical species. Chemical reaction network dynamics thus are a prime example of complex networked dynamical systems, and there is a clear need to develop a mathematical framework for handling their complexity. 

One of the issues in formalizing complex chemical reaction network dynamics is the fact that their graph representation is not immediate; due to the fact that chemical reactions usually involve more than one substrate chemical species and more than one product chemical species. This problem is resolved by associating the {\it complexes} of the reactions, i.e. the left-hand (substrate) and right-hand (product) sides of each reaction, with the vertices of a graph, and the reactions with the edges\footnote{This approach is originating in the work of Horn and Jackson \cite{HornJackson}; see also Othmer \cite{Othmer} and \cite{AngeliEJC} for nice expos\'es and additional insights.}. The resulting directed graph, called the {\it graph of complexes}, is characterized by its incidence matrix $B$. Furthermore the {\it stoichiometric matrix} $S$ of the chemical reaction network, expressing the basic balance laws of the reactions, can be factorized as $S=ZB$, with the {\it complex-stoichiometric matrix} $Z$ encoding the expressions of the complexes in the various chemical species. We note that an alternative graph formulation of chemical reaction networks is the {\it species-reaction graph} \cite{Craciun, Angeli2010, AngeliEJC, Angeli2011}, which is a bipartite graph with one part of the vertices corresponding to the species and the remaining part to the reactions, and the edges expressing the involvement of the species in the reactions.

Using the graph of complexes formalism, we have developed in \cite{Ours, OursACC} a compact mathematical formulation for a class of mass-action kinetics chemical reaction networks, which is characterized by the assumption of the existence of an equilibrium for the reaction rates; a so-called thermodynamic equilibrium. This corresponds to the thermodynamically justified assumption of microscopic reversibility, with the resulting conditions on the parameters of the mass action kinetics usually referred to as the Wegscheider conditions. The resulting class of mass action kinetics reaction networks are called {\it (detailed)-balanced}. A main feature of the formulation of \cite{Ours} is the fact that the dynamics of a detailed-balanced chemical reaction network is completely specified by a {\it symmetric weighted Laplacian matrix}, defined by the graph of complexes and the equilibrium constants, together with an energy function, which is subsequently used for the stability analysis of the network. In particular, the resulting dynamics is shown \cite{OursACC} to bear close similarity with {\it consensus algorithms} for symmetric multi-agent systems. (In fact, it is shown in \cite{Ours} that the so-called {\it complex-affinities} asymptotically reach consensus.) Furthermore, as shown in \cite{OursMTNS}, the framework can be readily extended from mass action kinetics to (reversible) Michaelis-Menten reaction rates.

On the other hand, the assumption of existence of a thermodynamical equilibrium requires {\it reversibility} of all the reactions of the network, while there are quite a few well-known irreversible chemical reaction network models, including the McKeithan network to be explained shortly afterwards. Motivated by such examples we will extend in this paper the results of \cite{Ours} by considering the substantially larger class of {\it complex-balanced} reaction networks. A chemical reaction network is called \emph{complex-balanced} if there exists a vector of species concentrations at which the combined rate of outgoing reactions from any complex is equal to the combined rate of incoming reactions to the complex, or in other words each of the complexes involved in the network is at equilibrium. The notion of complex-balanced networks was first introduced in \cite{HornJackson} and studied in detail in \cite{Feinberg2, Horn, Toric, Siegel, Dick}. These systems have also been called as \emph{toric dynamical systems} in the literature (see \cite{Toric}). 

An example of a complex-balanced network is the model of T-cell interactions due to \cite{McKeithan} (see also \cite{Sontag}) depicted in Figure \ref{fig:McKeithan}. 
\begin{figure}[h]
\centerline{
  \scalebox{0.8}{
     \ifx\JPicScale\undefined\def\JPicScale{1}\fi
\unitlength \JPicScale mm
\begin{picture}(136,24)(0,0)
\put(0,0){\makebox(0,0)[cc]{$T+M$}}

\linethickness{0.3mm}
\put(6,0){\line(1,0){11}}
\put(17,0){\vector(1,0){0.12}}
\put(20,0){\makebox(0,0)[cc]{$C_0$}}

\linethickness{0.3mm}
\put(25,0){\line(1,0){12}}
\put(37,0){\vector(1,0){0.12}}
\put(40,0){\makebox(0,0)[cc]{$C_1$}}

\linethickness{0.3mm}
\put(47,0){\line(1,0){12}}
\put(59,0){\vector(1,0){0.12}}
\put(64,0){\makebox(0,0)[cc]{$\ldots$}}

\linethickness{0.3mm}
\put(70,0){\line(1,0){12}}
\put(82,0){\vector(1,0){0.12}}
\put(87,0){\makebox(0,0)[cc]{$C_i$}}

\linethickness{0.3mm}
\put(93,0){\line(1,0){12}}
\put(105,0){\vector(1,0){0.12}}
\put(111,0){\makebox(0,0)[cc]{$\ldots$}}

\linethickness{0.3mm}
\put(119,0){\line(1,0){12}}
\put(131,0){\vector(1,0){0.12}}
\put(136,0){\makebox(0,0)[cc]{$C_N$}}

\linethickness{0.3mm}
\qbezier(19,2)(14.32,5.14)(11.07,5.98)
\qbezier(11.07,5.98)(7.82,6.82)(5.5,5.5)
\qbezier(5.5,5.5)(3.15,4.2)(2.07,3.59)
\qbezier(2.07,3.59)(0.99,2.99)(1,3)
\put(9,-3){\makebox(0,0)[cc]{$k_{p,0}$}}

\put(10,3){\makebox(0,0)[cc]{$k_{-1,0}$}}

\linethickness{0.3mm}
\multiput(1,3)(0.12,0.25){8}{\line(0,1){0.25}}
\linethickness{0.3mm}
\multiput(1,3)(0.25,-0.12){8}{\line(1,0){0.25}}
\linethickness{0.3mm}
\qbezier(39,2)(29.14,7.24)(20.24,7.72)
\qbezier(20.24,7.72)(11.33,8.21)(2,4)
\linethickness{0.3mm}
\qbezier(85,2)(57.96,9.34)(37.99,9.82)
\qbezier(37.99,9.82)(18.02,10.31)(2,4)
\linethickness{0.3mm}
\qbezier(134,3)(85.59,14.02)(53.82,14.26)
\qbezier(53.82,14.26)(22.06,14.5)(2,4)
\put(26,4){\makebox(0,0)[cc]{$k_{-1,1}$}}

\put(54,6){\makebox(0,0)[cc]{$k_{-1,i}$}}

\put(96,7){\makebox(0,0)[cc]{$k_{-1,N}$}}

\put(31,-3){\makebox(0,0)[cc]{$k_{p,1}$}}

\put(52,-3){\makebox(0,0)[cc]{$k_{p,2}$}}

\put(76,-3){\makebox(0,0)[cc]{$k_{p,i}$}}

\put(98,-3){\makebox(0,0)[cc]{$k_{p,i+1}$}}

\put(125,-3){\makebox(0,0)[cc]{$k_{p,N}$}}

\end{picture}
     }
   }
\caption{McKeithan's network}
\label{fig:McKeithan}
\end{figure}
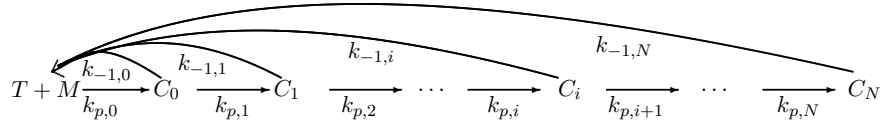
This chemical reaction network model arises in immunology and was proposed by McKeithan in order to explain the selectivity of T-cell interactions. With reference to Figure \ref{fig:McKeithan}, $T$ and $M$ represent a T-cell receptor and a peptide-major histocompatibility complex (MHC) respectively and $T+M$ is a complex for the network. For $i=1,\ldots,N$, $C_i$ represent various intermediate complexes in the phosphorylation and other intermediate modifications of the T-cell receptor $T$; $k_{p,i}$ represents the rate constant of the $\ith$ step of the phosphorylation and $k_{-1,i}$ is the dissociation rate of the $\ith$ complex. In the following, we denote by $[A]$ the concentration of a species $A$ participating in a chemical reaction network. The governing law of the reaction network is the law of mass action kinetics. This leads to the following set of differential equations describing the rate of change of concentrations of various species involved in the network:
\begin{eqnarray}\label{eq:McK}
\nonumber \frac{d[T]}{dt} &=& \frac{d[M]}{dt}=-k_{p,0}[T][M]+\sum_{i=0}^{N}k_{-1,i}[C_i] \\
\nonumber \frac{d[C_0]}{dt} &=& k_{p,0}[T][M] -(k_{-1,0}+k_{p,1})[C_0] \\
\nonumber   &\vdots & \\
\nonumber \frac{d[C_{i}]}{dt} &=& k_{p,i}[C_{i-1}] -(k_{-1,i}+k_{p,i+1})[C_{i}] \\
\nonumber   &\vdots & \\
\frac{d[C_N]}{dt} &=& k_{p,N}[C_{N-1}] -k_{-1,N}[C_N] 
\end{eqnarray} 
Observe that if the left hand side of each of the above equations is set to zero, all the concentrations $[C_i]$ for $i=0,\ldots,N-1,$ can be parametrized in terms of $[C_N]$. Since all the rate constants and dissociation constants are positive, it is easy to see that there exists a set of positive concentrations $\{[T],[M], [C_0], \ldots, [C_N]\}$ for which the right hand sides of the equations (\ref{eq:McK}) vanish. This implies that McKeithan's network is complex-balanced. On the other hand, all reactions in this network are {\it irreversible}, and thus McKeithan's network is not detailed-balanced.

The main aim of this paper is to show how the compact mathematical formulation of detailed-balanced chemical reaction networks derived in \cite{Ours} can be extended to complex-balanced networks (such as McKeithan's network). Indeed, the crucial difference between detailed-balanced and complex-balanced networks will turn out to be that in the latter case the Laplacian matrix is {\it not symmetric} anymore, but still {\it balanced} (in the sense of the terminology used in graph theory and multi-agent dynamics). In particular, complex-balanced chemical networks will be shown to bear close resemblance with asymmetric consensus dynamics with balanced Laplacian matrix. Exploiting this formulation it will be shown how the dynamics of complex-balanced networks share important common characteristics with those of detailed-balanced networks, including a similar characterization of the set of all equilibria and the same stability result stating that the system converges to an equilibrium point uniquely determined by the initial condition. For the particular case when the complex-stoichiometric matrix $Z$ is injective as in the McKeithan's network case (see Remark \ref{remark}), the same asymptotic stability results were obtained before in \cite{Sontag}. 
Furthermore, while for detailed-balanced networks it has been shown in \cite{Ours} that all equilibria are in fact thermodynamic equilibria in this paper the similar result will be proved that all equilibria of a complex-balanced network are complex-equilibria. Similar results have already been proved in \cite{HornJackson}; however, the proofs presented in the current paper are much more concise and insightful as compared to those presented in \cite{HornJackson}.

Furthermore, based on our formulation of complex-balanced networks exhibiting a balanced weighted Laplacian matrix associated to the graph of complexes, we will propose a technique for model-reduction of complex-balanced networks. This technique is similar to the Kron reduction method for model reduction of resistive electrical networks described in \cite{Kron}; see also \cite{Doerfler, vdsSCL}. Our technique works by deleting complexes from the graph of complexes associated with the network. In other words, our reduced network has fewer complexes and usually fewer reactions as compared to the original network, and yet the behavior of a number of significant metabolites in the reduced network is approximately the same as in the original network. Thus our model reduction method is useful from a computational point of view, specially when we need to deal with models of large-scale chemical reaction networks. Mathematically our approach is based on the result that the Schur complement (with respect to the deleted complexes) of the balanced weighted Laplacian matrix of the full graph of complexes is again a balanced weighted Laplacian matrix corresponding to the reduced graph of complexes.


The paper is organized as follows. In Section 2, we introduce tools from stoichiometry of reactions and graph theory that are required to derive our mathematical formulation. In Section 3, we explain mass action kinetics, define complex-equilibria and complex-balanced networks and then derive our formulation. In Section 4, we derive equilibrium and stability properties of complex-balanced networks using our formulation. In Section 5, we propose a model reduction method for complex-balanced networks, while Section 6 presents conclusions based on our results.

\bigskip

\noindent\emph{\bf Notation}:  The space of ${n}$ dimensional real vectors is denoted by $\mathbb{R}^{{n}}$,
and the space of ${m}\times {n}$ real matrices by $\mathbb{R}^{{m}\times {n}}$. The space of ${n}$ dimensional real vectors consisting of all strictly positive entries is denoted by $\mR_+^{n}$ and the space of ${n}$ dimensional real vectors consisting of all nonnegative entries is denoted by $\bar{\mR}_+^{n}$. The rank of a real matrix $A$ is denoted by $\rank A$. dim$(\mathcal{V})$ denotes the dimension of a set $\mathcal{V}$. Given $a_1,\ldots,a_n \in \mR$, $\mbox{diag}(a_1,\ldots,a_n)$ denotes the diagonal matrix with diagonal entries $a_1,\ldots,a_n$; this notation is extended to the block-diagonal case when $a_1,\ldots,a_n$ are real square matrices. Furthermore, $\ker A$ and $\spa A$ denote the kernel and span respectively of a real matrix $A$. If $U$ denotes a linear subspace of $\mathbb{R}^m$, then $U^{\perp}$ denotes its orthogonal subspace (with respect to the standard Euclidian inner product). $\mathds{1}_m$ denotes a vector of dimension $m$ with all entries equal to 1. The time-derivative $\frac{dx}{dt}(t)$ of a vector $x$ depending on time $t$ will be usually denoted by $\dot{x}$. 

Define the mapping
$\mathrm{Ln} : \mathbb{R}_+^m \to \mathbb{R}^m, \quad x \mapsto \mathrm{Ln}(x),$
as the mapping whose $i$-th component is given as
$\left(\mathrm{Ln}(x)\right)_i := \mathrm{ln}(x_i).$
Similarly, define the mapping
$\mathrm{Exp} : \mathbb{R}^m \to \mathbb{R}_+^m, \quad x \mapsto \mathrm{Exp}(x),$
as the mapping whose $i$-th component is given as
$\left(\mathrm{Exp}(x)\right)_i := \mathrm{exp}(x_i).$
Also, define for any vectors $x,z \in \mathbb{R}^m$ the vector $x \cdot z \in \mathbb{R}^m$ as the element-wise product $\left(x \cdot z\right)_i :=x_iz_i, \, i=1,2,\ldots,m,$ and the vector $\frac{x}{z} \in \mathbb{R}^m$ as the element-wise quotient $\left(\frac{x}{z}\right)_i := \frac{x_i}{z_i}, \, i=1,\cdots,m$. Note that with these notations $\Exp (x + z) = \Exp (x) \cdot \Exp (z)$ and $\Ln (x \cdot z) = \Ln (x) + \Ln (z), \Ln \left(\frac{x}{z}\right) = \Ln (x) - \Ln (z)$.

\section{Chemical reaction network structure}
In this section, we introduce the tools necessary in order to derive our mathematical formulation of the dynamics of complex-balanced networks. First we introduce the concept of stoichiometric matrix of a reaction network. We then define the concept of a graph of complexes, which was first introduced in the work of Horn \& Jackson and Feinberg (\cite{HornJackson, Horn, Feinberg, Feinberg1}). 

\subsection{Stoichiometry}\label{sec:stoich}
Consider a chemical reaction network involving $m$ chemical species (metabolites), among which $r$ chemical reactions take place. The basic structure underlying the dynamics of the vector $x \in \mathbb{R}_+^m$ of concentrations $x_i, i=1,\cdots, m,$ of the chemical species is given by the {\it balance laws} $\dot{x} = Sv$, where $S$ is an $m \times r$ matrix, called the {\it stoichiometric matrix}. The elements of the vector $v \in \mathbb{R}^r$ are commonly called the (reaction) {\it fluxes} or {\it rates}. The stoichiometric matrix $S$, which consists of (positive and negative) integer elements, captures the basic conservation laws of the reactions. It already contains useful information about the network dynamics, {\it independent} of the precise form of the reaction rate $v(x)$. Note that the reaction rate depends on the governing law prescribing the dynamics of the reaction network.

We now show how to construct the stoichiometric matrix for a reaction network with the help of an example shown in Figure \ref{fig:eg1}.
\begin{figure}[h]
\centerline{
  \scalebox{1}{
     \input{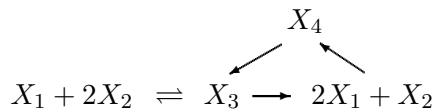}
     }
   }
\caption{Example of a reaction network}
\label{fig:eg1}
\end{figure}

Note that the above reaction has 3 irreversible and 1 reversible reaction leading to in total 5 reactions among four species $X_1$, $X_2$, $X_3$ and $X_4$. Since the stoichiometric matrix maps the space of reactions to the space of species, it has dimension $4 \times 5$. The entry of $S$ corresponding to the $\ith$ row and $\jth$ column is obtained by subtracting the number of moles of the $\ith$ species on the product side from that on the substrate side for the $\jth$ reaction. Thus
\[
S=\begin{bmatrix}
-1 & 1 & 2 & -2 & 0 \\
-2 & 2 & 1 & -1 & 0 \\
1 & -1 & -1 & 0 & -1 \\
0 & 0 & 0 & 1 & 1
\end{bmatrix}
\]
for the reaction network depicted in Figure \ref{fig:eg1}.


In this paper, we focus only on closed chemical reaction networks meaning those without external fluxes. Therefore unless otherwise mentioned, our chemical reaction networks do not have any external fluxes.

If there exists an $m$-dimensional row-vector $k$ such that $kS = 0$, then the quantity $kx$ is a {\it conserved quantity} or a {\it conserved moiety} for the dynamics $\dot{x} = Sv(x)$ for {\it all} possible reaction rates $v=v(x)$. Indeed, $\frac{d}{dt} kx = kSv(x) =0$. Later on, in Remark \ref{rem:LCM}, it will be shown that law of conservation of mass leads to a conserved moiety of a chemical reaction network.

Note that for all possible fluxes the solutions of the differential equations $\dot{x} = Sv(x)$ starting from an initial state $x_0$ will always remain within the affine space
\begin{equation}\label{eq:PSCC} 
\mathcal{S}_{x_0}:=\{x \in \mR_+^m \mid x-x_0 \in \im S\}.
\end{equation}
$\mathcal{S}_{x_0}$ has been referred to as the \emph{positive stoichiometric compatibility class} (corresponding to $x_0$) in \cite{Feinberg1, Siegel, GAC}. 

\subsection{Graph of complexes}
The structure of a chemical reaction network cannot be directly captured by an ordinary graph. Instead, we will follow an approach originating in the work of Horn and Jackson \cite{HornJackson}, introducing the space of {\it complexes}. The set of complexes of a chemical reaction network is simply defined as the union of all the different left- and righthand sides (substrates and products) of the reactions in the network. Thus, the complexes corresponding to the network (\ref{fig:eg1}) are $X_1+2X_2$, $X_3$, $2X_1+X_2$ and $X_4$.

The expression of the complexes in terms of the chemical species is formalized by an $m \times c$ matrix $Z$, whose $\alpha$-th column captures the expression of the $\alpha$-th complex in the $m$ chemical species. For example, for the network depicted in Figure \ref{fig:eg1},
\[
Z=\begin{bmatrix} 1 & 0 & 2 & 0 \\ 2 & 0 & 1 & 0 \\0 & 1 & 0 & 0 \\ 0 & 0 & 0 & 1\end{bmatrix}.
\]
We will call $Z$ the {\it complex-stoichiometric matrix} of the network. Note that by definition all elements of the matrix $Z$ are non-negative integers.

Since the complexes are the left- and righthand sides of the reactions, they can be naturally associated with the vertices of a {\it directed graph} $\mathcal{G}$ with edges corresponding to the reactions. Formally, the reaction $ \alpha  \longrightarrow \beta$ between the $\alpha$-th (reactant) and the $\beta$-th (product) complex defines a directed edge with tail vertex being the $\alpha$-th complex and head vertex being the $\beta$-th complex. The resulting graph will be called the {\it graph of complexes}.

Recall, see e.g. \cite{Bollobas}, that any graph is defined by its {\it incidence matrix} $B$. This is a $c \times r$ matrix, $c$ being the
number of vertices and $r$ being the number of edges, with $(\alpha,j)$-th
element equal to $-1$ if vertex $\alpha$ is the tail vertex of edge $j$ and $1$ if vertex $\alpha$ is the head vertex of edge $j$, while $0$ otherwise. 

Obviously, there is a close relation between the matrix $Z$ and the stoichiometric matrix $S$. In fact, it is easily checked that
\begin{equation}\label{eq:vel}
S = ZB, \quad \text{hence} \quad \dot{x}=ZBv(x)
\end{equation} 
with $B$ the incidence matrix of the graph of complexes.

\section{The dynamics of complex-balanced networks governed by mass action kinetics}
In this section, we first recall the dynamics of species concentrations of reactions governed by mass action kinetics. We then define complex-balanced networks and derive a compact mathematical formulation for their dynamics.

\subsection{The general form of mass action kinetics}
Recall that for a chemical reaction network, the relation between the reaction rates and species concentrations depends on the governing laws of the reactions involved in the network. In this section, we explain this relation for reaction networks governed by mass action kinetics.
In other words, if $v$ denotes the vector of reaction rates and $x$ denotes the species concentration vector, we show how to construct $v(x)$. The reaction rate of the $j$-th reaction of a mass action chemical reaction network, from a substrate complex $\mathcal{S}_j$ to a product complex $\mathcal{P}_j$, is given as
\begin{equation}\label{eq:massaction}
v_j(x) = k_j\prod_{i=1}^m x_i ^{Z_{i \mathcal{S}_j}},
\end{equation}
where $Z_{i \rho}$ is the $(i,\rho)$-th element of the complex-stoichiometric matrix $Z$, and $k_j$ $ \geq 0$ is the rate constant of the $j$-th reaction. Without loss of generality we will assume throughout that for every $j$, the constant $k_j$ is positive. 

This can be rewritten in the following way. Let $Z_{\mathcal{S}_j}$ denote the column of the complex-stoichiometric matrix $Z$ corresponding to the substrate complex of the $j$-th reaction. Using the mapping $\mathrm{Ln}  : \mathbb{R}^c_+ \to \mathbb{R}^c$ as defined at the end of the Introduction, the mass action reaction equation (\ref{eq:massaction}) for the $j$-th reaction from substrate complex $\mathcal{S}_j$ to product complex $\mathcal{P}_j$ can be rewritten as
\begin{equation}\label{eq:massactionwithZ}
v_j(x)=k_{j} \exp\big(Z_{\mathcal{S}_j}^T \mathrm{Ln}(x)\big).
\end{equation}
Based on the formulation in (\ref{eq:massactionwithZ}), we can describe the complete reaction network dynamics as follows. Let the mass action rate for the complete set of reactions be given by the vector $v(x)= \begin{bmatrix} v_1(x) & \cdots & v_r(x) \end{bmatrix}^T$. 
For every $\sigma,\pi \in \{1,\ldots, c\}$, define
\[
\mathcal{C}_{\pi \sigma}:=\left\{j \in \{1,\ldots,r\} \mid (\sigma,\pi)=(\mathcal{S}_j,\mathcal{P}_j)\right\}
\]
and $a_{\pi \sigma}:=\sum_{j\in \mathcal{C}_{\pi \sigma}}k_{j}$. Thus if there is no reaction $\sigma\rightarrow \pi$, then $a_{\pi \sigma}=0$. Define the {\it weighted adjacency matrix} $A$ of the graph of complexes as the matrix with $(\pi,\sigma)$-th element $a_{\pi \sigma}$, where $\pi,\sigma \in \{1, \ldots,c\}$.
Furthermore, define $L := \Delta - A$, where $\Delta$ is the diagonal matrix whose $(\rho,\rho)$-th element is equal to the sum of the elements of the $\rho$-th column of $A$. Let $B$ denote the incidence matrix of the graph of complexes associated with the network. By definition of $L$, we have
$\mathds{1}_c^T L = 0$. It can be verified that the vector $Bv(x)$ for the mass action reaction rate vector $v(x)$ is equal to $-L \Exp \left(Z^T \Ln(x) \right)$, where the mapping $\Exp : \mathbb{R}^c \to \mathbb{R}^c_+$ has been defined at the end of the Introduction.
Hence the dynamics can be compactly written as
\begin{equation}\label{sontag}
\dot{x} = - Z L \mathrm{Exp} \left(Z^T \mathrm{Ln}(x)\right)
\end{equation}
A similar expression of the dynamics corresponding to mass action kinetics, in less explicit form, was already obtained in \cite{Sontag}.

\subsection{Complex-balanced networks}
We now define a class of reaction networks known as complex-balanced networks. This class was first defined in the work of Horn and Jackson (see p. 92 of \cite{HornJackson}). We first define a complex-equilibrium of a reaction network.
\begin{definition}
Consider a chemical reaction network with dynamics given by the equation $($\ref{eq:vel}$)$. A vector of concentrations $x^* \in \mR_+^{m}$ is called a \emph{complex-equilibrium} if $Bv(x^*)=0$. Furthermore, a chemical reaction network is called \emph{complex-balanced} if it admits a complex-equilibrium.
\end{definition}
It is easy to see that any complex-equilibrium is an equilibrium for the network, but the other way round need not be true (since $Z$ need not be injective). We now explain the physical interpretation of a complex-equilibrium. Observe that 
\begin{equation}\label{eq:CE}
Bv(x^*)=0
\end{equation}
consists of $c$ equations where $c$ denotes the number of complexes. Among all the reactions that the $\ith$ complex $C_i$ is involved in, let $\mathcal{O}_i$ denote the set of all the reactions for which $C_i$ is the substrate complex and let $\mathcal{I}_i$ denote the set of all reactions for which $C_i$ is the product complex.  The $\ith$ of equations (\ref{eq:CE}) can now be written as 
\[
\sum_{k \in \mathcal{I}_i}v_k(x^*)=\sum_{k \in \mathcal{O}_i}v_k(x^*)
\]
It follows that at a complex-equilibrium, the combined rate of outgoing reactions from any complex is equal to the combined rate of incoming reactions to the complex. In other words, at a complex-equilibrium, every complex involved in the network is at equilibrium.


In \cite{Horn} and \cite{Dick}, conditions have been derived for a chemical reaction network governed by mass action kinetics to be complex-balanced.

\begin{remark}\rm
A \emph{thermodynamically balanced} or \emph{detailed-balanced} chemical reaction network is one for which there exists a vector of positive species concentrations $x^*$ at which each of the reactions of the network is at equilibrium, that is, $v(x^*)=0$, see e.g. \cite{Ours}. Such networks are necessarily reversible. Clearly every thermodynamically balanced network is complex-balanced. 
\end{remark}

We now rewrite the dynamical equations for complex-balanced networks governed by mass action kinetics in terms of a known complex-equilibrium. It will be shown that such a form of equations has advantages in deriving stability properties of and also a model reduction method for complex-balanced networks. Recall equation (\ref{sontag}) for general mass action reaction networks:
\begin{equation}\label{eq:Sontag}
\dot{x} = -Z L \mathrm{Exp} \left(Z^T \mathrm{Ln}(x)\right)
\end{equation}
Assume that the network is complex-balanced with a complex-equilibrium $x^*$. 
Define 
\[
K(x^*):= \text{diag}_{i=1}^{c}\big(\exp(Z_i^T \Ln (x^*))\big)
\]
where $Z_i$ denotes the $\ith$ column of $Z$. Equation (\ref{eq:Sontag}) can be rewritten as
\begin{equation}\label{eq:standform}
\dot{x} = -ZLK(x^*)K(x^*)^{-1}\Exp\big(Z^T\Ln (x)\big)=-Z\mL(x^*)\Exp\left(Z^T\Ln\left(\frac{x}{x^*}\right)\right)
\end{equation}
where $\mL(x^*):=LK(x^*)$. Note that since $\mathds{1}_c^TL=0$, also $\mathds{1}_c^T\mL(x^*)=0$. Furthermore since $x^*$ is a complex-equilibrium, we have
\[
\mL(x^*)\mathds{1}_c = \mathcal{L}(x^*)\Exp\left(Z^T\Ln\left(\frac{x}{x^*}\right)\right)_{\mid_{x=x^*}}=0
\]
Hereafter, we refer to $\mL(x^*)$ as the \emph{weighted Laplacian} of the graph of complexes associated with the given complex-balanced network. Both the row and column sums of the weighted Laplacian $\mL(x^*)$ are equal to zero \footnote{In the literature on directed graphs (see e.g. \cite{CDC}), $\mL$ is called \emph{balanced}. Note that the matrix $L$, having zero column sums but {\it not} zero row sums, is similar to the `advection' set-up considered in \cite{CDC}.}. It is this special property of the weighted Laplacian that we make use of in deriving all the results stated further on in this paper.

\subsection{The linkage classes of a graph of complexes} \label{sec:link}
A \emph{linkage class} of a chemical reaction network is a maximal set of complexes $\{\mathcal{C}_1,\ldots,\mathcal{C}_k\}$ such that $\mathcal{C}_i$ is connected by reactions to $\mathcal{C}_j$ for every $i,j \in \{1,\ldots,k\}, i \neq j$. It can be easily verified that the number of linkage classes $(\ell)$ of a network, which is equal to the number of connected components of the graph of complexes corresponding to the network, is given by $\ell =c-$ rank$(B)$ (the number of linkage classes in the terminology of \cite{HornJackson, Feinberg, Feinberg1}). The graph of complexes is connected, i.e., there is one linkage class in the network if and only if ker$(B^T)=\text{span}\big(\mathds{1}_c\big)$.

Assume that the reaction network has $\ell$ linkage classes. Assume that the $\ith$ linkage class has $r_i$ reactions between $c_i$ complexes. Partition $Z$, $B$ and $\mL$ matrices according to the various linkage classes present in the network as follows:
\begin{eqnarray*}
Z &=& \begin{bmatrix}
Z_1 & Z_2 & \ldots & Z_{\ell}
\end{bmatrix}\\
B &=& \begin{bmatrix}
B_1 & 0 & 0 & \ldots & 0\\
0 & B_2 & 0 & \ldots & 0\\
\vdots & \vdots & \ddots & \vdots & \vdots \\
0 & \ldots & 0 & B_{\ell -1} & 0\\
0 & \ldots & \ldots & 0 & B_{\ell}
\end{bmatrix}\\
\mL(x^*) &=& \text{diag}(\mL_1(x^*),\mL_2(x^*), \ldots, \mL_{\ell -1}(x^*),\mL_{\ell}(x^*))
\end{eqnarray*}
where for $(i=1,\ldots,\ell)$, $Z_i \in \bar{\mR}_+^{m \times c_i}$, $B_i \in \mR^{c_i \times r_i}$ and $\mL_i(x^*) \in \mR^{c_i \times c_i}$ denote the complex-stoichiometric matrix, incidence matrix and the weighted Laplacian matrices corresponding to equilibrium concentration $x^*$ respectively for the $\ith$ linkage class. Let $S_i$ denote the stoichiometric matrix of the $\ith$ linkage class. It is easy to see that $S_i=Z_iB_i$. Observe that equation (\ref{eq:standform}) can be written as
\begin{equation}\label{eq:link}
\dot{x}=-\sum_{i=1}^{\ell}Z_i\mL_i(x^*)\Exp\left(Z_i^T\Ln\left(\frac{x}{x^*}\right)\right)
\end{equation}

\begin{remark}\rm \label{rem:LCM}
The \emph{law of conservation of mass} states that there exists $u \in \mR_+^m$, such that $Z_i^{T}u \in \spa \big(\mathds{1}_{c_i}\big)$ for $i=1,\ldots,\ell$. This implies that $u^{T}x$ is a conserved moiety for the dynamics $\dot{x} = ZBv$, for {\it all} forms of the reaction rate $v(x)$. Indeed, $Z_i^{T}u \in \spa \big(\mathds{1}_{c_i}\big)$ implies $u^{T}ZB = 0$, since $B_i^T \mathds{1}_{c_i} =0$.
\end{remark}

\section{Equilibria and stability of complex-balanced networks}
In this section, we make use of the compact mathematical formulation (\ref{eq:standform}) in order to derive properties of equilibria and stability of complex-balanced networks. 
\subsection{Equilibria}
Our first result is a characterization of the set of all equilibria of a complex-balanced network in terms of a known equilibrium.
\begin{theorem}\label{th:char}
Consider a complex-balanced network governed by mass action kinetics. Let $S \in \mR^{m \times r}$ denote the stoichiometric matrix and assume that $x^* \in \mR_+^{m}$ is a complex-equilibrium for the network. The following hold:
\begin{enumerate}
\item $x^{**}\in \mR_+^{m}$ is another equilibrium for the network iff $S^{T}\Ln\left(\frac{x^{**}}{x^*}\right)=0$.
\item Every equilibrium of the network is a complex-equilibrium.
\end{enumerate}
\end{theorem}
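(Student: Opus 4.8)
The plan is to introduce the single vector $\gamma := \Ln\left(\frac{x^{**}}{x^*}\right) \in \mR^m$ and $\mu := Z^T\gamma \in \mR^c$, and to prove that the following four conditions are equivalent: (i) $x^{**}$ is an equilibrium; (ii) $x^{**}$ is a complex-equilibrium; (iii) $B^T\mu = 0$; (iv) $S^T\gamma = 0$. Since $S=ZB$ gives $S^T\gamma = B^TZ^T\gamma = B^T\mu$, conditions (iii) and (iv) coincide, so statement~1 is exactly (i)$\Leftrightarrow$(iv) and statement~2 is (i)$\Rightarrow$(ii). Using (\ref{eq:standform}) together with $Bv(x) = -\mL(x^*)\Exp\left(Z^T\Ln(x/x^*)\right)$, condition (i) reads $Z\mL(x^*)\Exp(\mu)=0$ and condition (ii) reads $\mL(x^*)\Exp(\mu)=0$. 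Thus the whole theorem reduces to understanding the kernel structure of the balanced Laplacian $\mL(x^*)$ evaluated at vectors of the form $\Exp(Z^T\gamma)$.

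Two of the implications are immediate. Clearly (ii)$\Rightarrow$(i), by left-multiplying $\mL(x^*)\Exp(\mu)=0$ by $Z$. For (iii)$\Rightarrow$(ii) I would use the linkage-class decomposition of Section~\ref{sec:link}: $B^T\mu=0$ means precisely that $\mu$ is constant on each connected component of the graph of complexes, so on the $\ith$ linkage class $\Exp(\mu)$ equals a positive scalar multiple of $\mathds{1}_{c_i}$; since $\mL(x^*)$ is block-diagonal with blocks $\mL_i(x^*)$ and each block has zero row sums (inherited from $\mL(x^*)\mathds{1}_c=0$), it follows that $\mL(x^*)\Exp(\mu)=0$.

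The substance of the proof, and the step I expect to be the main obstacle, is (i)$\Rightarrow$(iii). Starting from $Z\mL(x^*)\Exp(\mu)=0$, I would take the inner product with $\gamma$ and use $\gamma^TZ = \mu^T$ to obtain the scalar identity $\mu^T\mL(x^*)\Exp(\mu)=0$; here it is essential that $\mu$ lies in $\im Z^T$. The goal is then the key lemma: for the balanced weighted Laplacian $\mL(x^*)$ one has $\mu^T\mL(x^*)\Exp(\mu)\geq 0$, with equality if and only if $B^T\mu=0$. To prove it, write $\mL(x^*)_{\pi\sigma} = -w_{\pi\sigma}\leq 0$ for $\pi\neq\sigma$ and expand, using the zero column sums, to obtain
\[
\mu^T\mL(x^*)\Exp(\mu) = \sum_{\sigma\to\pi} w_{\pi\sigma}\, e^{\mu_\sigma}(\mu_\sigma-\mu_\pi),
\]
the sum being over the directed edges $\sigma\to\pi$ of the graph.

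Applying the elementary convexity inequality $e^{a}(a-b)\geq e^{a}-e^{b}$ (with equality iff $a=b$) termwise bounds this below by $\sum_{\sigma\to\pi} w_{\pi\sigma}(e^{\mu_\sigma}-e^{\mu_\pi})$, and this last sum vanishes because the weighted in-degree and out-degree at each vertex agree---which is exactly balancedness, i.e. $\mathds{1}_c^T\mL(x^*)=0$ together with $\mL(x^*)\mathds{1}_c=0$. Hence $\mu^T\mL(x^*)\Exp(\mu)\geq 0$, and equality forces $\mu_\sigma=\mu_\pi$ across every edge, i.e. $\mu$ constant on each linkage class, i.e. $B^T\mu=0$. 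The delicate points to get right are the exact bookkeeping in the expansion and the verification that it is \emph{balancedness} (rather than symmetry, which fails here) that makes the residual term cancel---this replaces the symmetric quadratic-form argument available in the detailed-balanced case. Chaining (i)$\Rightarrow$(iii)$\Rightarrow$(ii)$\Rightarrow$(i) then closes the loop and yields both assertions of the theorem.
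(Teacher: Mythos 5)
Your proposal is correct and follows essentially the same route as the paper: the same key inequality $\mu^T\mL(x^*)\Exp(\mu)\geq 0$ with equality iff $B^T\mu=0$ (the paper's Lemma \ref{lemma}), proved the same way via termwise convexity of the exponential plus balancedness, and the same linkage-class argument with $\mL_i(x^*)\mathds{1}_{c_i}=0$ for the converse and for part 2. The only (cosmetic) difference is that you organize the argument as a cycle of four equivalent conditions rather than as three separate implications, and you correctly state the convexity inequality $e^{a}(a-b)\geq e^{a}-e^{b}$ where the paper's displayed version contains a sign typo.
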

The proof of this theorem crucially makes use of the following lemma, which will also be the basis for the proof of Theorem \ref{th:Lyap}.
\begin{lemma}\label{lemma}
Let $\mL(x^*)$ be a balanced weighted Laplacian matrix as before. Then for any $\gamma \in \mathbb{R}^c$
\begin{equation*}
\gamma^T\mL(x^*)\Exp(\gamma) \geq 0
\end{equation*}
Moreover
\begin{equation*}
\gamma^T\mL(x^*)\Exp(\gamma) = 0 \mbox{ if and only if } B^T \gamma =0
\end{equation*}
\end{lemma}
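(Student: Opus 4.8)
The plan is to expand the scalar $\gamma^T \mL(x^*)\Exp(\gamma)$ directly in terms of the entries of the balanced weighted Laplacian and to reduce it to a sum over the edges of the graph of complexes. Writing $\mL(x^*) = LK(x^*)$ as before, the off-diagonal entry $(\pi,\sigma)$ equals $-w_{\pi\sigma}$ with $w_{\pi\sigma} := a_{\pi\sigma}\exp\big(Z_\sigma^T \Ln(x^*)\big) \geq 0$ the effective weight of the edge $\sigma \to \pi$, while the diagonal entries are fixed by the zero-column-sum property $\mathds{1}_c^T\mL(x^*)=0$, namely $\mL(x^*)_{\sigma\sigma}=\sum_{\pi}w_{\pi\sigma}$. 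Abbreviating $g := \Exp(\gamma)$, so that $g_\sigma = \exp(\gamma_\sigma)>0$, a short computation collecting the diagonal and off-diagonal contributions gives
\[
\gamma^T\mL(x^*)\Exp(\gamma) = \sum_{\pi,\sigma} w_{\pi\sigma}\, g_\sigma (\gamma_\sigma - \gamma_\pi).
\]

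First I would bound each summand from below. The elementary inequality $e^t \geq 1+t$, applied with $t = \gamma_\pi - \gamma_\sigma$ and multiplied by $g_\sigma=e^{\gamma_\sigma}$, yields $g_\sigma(\gamma_\sigma - \gamma_\pi) \geq g_\sigma - g_\pi$, with equality precisely when $\gamma_\sigma = \gamma_\pi$. Since every $w_{\pi\sigma}\geq 0$, summing gives $\gamma^T\mL(x^*)\Exp(\gamma) \geq \sum_{\pi,\sigma} w_{\pi\sigma}(g_\sigma - g_\pi)$. The crucial observation is that this lower bound vanishes. Indeed $\sum_{\pi,\sigma} w_{\pi\sigma} g_\sigma = \sum_\sigma g_\sigma\big(\sum_\pi w_{\pi\sigma}\big)$ weights each $g_\sigma$ by the total out-weight of vertex $\sigma$, while $\sum_{\pi,\sigma} w_{\pi\sigma}g_\pi = \sum_\pi g_\pi\big(\sum_\sigma w_{\pi\sigma}\big)$ weights each $g_\pi$ by the total in-weight of vertex $\pi$. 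Balancedness of $\mL(x^*)$, i.e. $\mL(x^*)\mathds{1}_c = 0$ together with $\mathds{1}_c^T\mL(x^*)=0$, forces the diagonal entry $\mL(x^*)_{vv}$ to equal both the out-weight and the in-weight of $v$, so the in-weight and out-weight coincide at every vertex and the two sums are identical. Hence the lower bound is $0$, which establishes $\gamma^T\mL(x^*)\Exp(\gamma)\geq 0$.

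For the equality statement I would track the slack in the termwise bound by rewriting the form as a manifestly nonnegative sum,
\[
\gamma^T\mL(x^*)\Exp(\gamma) = \sum_{\pi,\sigma} w_{\pi\sigma}\, e^{\gamma_\sigma}\Big(e^{\gamma_\pi - \gamma_\sigma} - 1 - (\gamma_\pi - \gamma_\sigma)\Big),
\]
where each bracket is $\geq 0$ because $e^t - 1 - t \geq 0$, vanishing exactly at $t=0$. Thus the whole expression is zero iff $\gamma_\pi = \gamma_\sigma$ for every ordered pair with $w_{\pi\sigma}>0$, that is, whenever $\sigma\to\pi$ is an edge of the graph of complexes. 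The last step is to recognise this edgewise condition as $B^T\gamma=0$: the $j$-th component of $B^T\gamma$ is $\gamma_{\mathrm{head}(j)}-\gamma_{\mathrm{tail}(j)}$, so $B^T\gamma=0$ holds precisely when $\gamma$ takes equal values on the endpoints of every edge, matching the equality condition directly (the forward direction uses that $w_{\pi\sigma}>0$ iff $\sigma\to\pi$ is a reaction, since $k_\sigma>0$, and the converse is immediate). The main obstacle, relative to the symmetric detailed-balanced case, is that one cannot symmetrise the edge sum directly; I expect the non-symmetry to be handled entirely by the one-sided estimate $g_\sigma(\gamma_\sigma-\gamma_\pi)\geq g_\sigma-g_\pi$ followed by the balanced cancellation, with the equality analysis requiring only the edge-by-edge identification above rather than any global connectivity argument.
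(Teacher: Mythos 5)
Your proposal is correct and follows essentially the same route as the paper's proof: expand the quadratic-type form as an edge sum, bound each term via the convexity inequality $e^{t}\geq 1+t$ (equivalently $(\beta-\alpha)e^{\alpha}\leq e^{\beta}-e^{\alpha}$), and observe that the resulting bound telescopes to zero precisely because row and column sums of $\mL(x^*)$ both vanish, with the equality case read off termwise and identified with $B^T\gamma=0$. The only cosmetic difference is that you package the slack as $e^{t}-1-t\geq 0$ to make the nonnegativity manifest, whereas the paper works with the upper bound on $-\gamma^T\mL(x^*)\Exp(\gamma)$ directly.
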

\begin{proof}
Let $\gamma_i$ denote the $\ith$ element of $\gamma$ and let $k_{ij}$ denote the negative of the element of $\mL(x^*)$ corresponding to the $\jth$ row and $\ith$ column. Note that $k_{ij} \geq 0, i,j=1,\cdots,c$. Using $\mathds{1}_c^T\mL(x^*)=0$ the expression $-\gamma^T\mL(x^*)\Exp(\gamma)$ can be rewritten as
\begin{eqnarray*}
\begin{bmatrix}
\sum_{i\neq 1}(\gamma_i-\gamma_1)k_{1i} & \sum_{i\neq 2}(\gamma_i-\gamma_2)k_{2i} & \ldots & \sum_{i\neq c}(\gamma_i-\gamma_c)k_{ci}
\end{bmatrix}\Exp(\gamma) \\
 = \sum_{j=1}^{c}\sum_{i\neq j}(\gamma_i-\gamma_j)k_{ji}\text{exp}(\gamma_j)
\end{eqnarray*}
Furthermore, since the exponential function is strictly convex
\begin{equation}
(\beta - \alpha) \text{exp}(\alpha) \leq \text{exp}(\alpha) - \text{exp}(\beta)
\end{equation}
for all $\alpha, \beta$, with equality if and only if $\alpha=\beta$. Hence 
\begin{eqnarray}\label{ineq:1}
-\gamma^T\mL(x^*)\Exp(\gamma) = \sum_{j=1}^{c}\sum_{i\neq j}(\gamma_i-\gamma_j)k_{ji}\text{exp}(\gamma_j) & \leq & \sum_{j=1}^{c} \sum_{i\neq j}k_{ji}\big(\text{exp}(\gamma_i)-\text{exp}(\gamma_j)\big)\\
\nonumber & = & \mathds{1}_c^T \begin{bmatrix}
\sum_{i \neq 1} k_{1i}\big(\text{exp}(\gamma_i)-\text{exp}(\gamma_1)\big) \\
\vdots \\
\sum_{i \neq c} k_{ci}\big(\text{exp}(\gamma_i)-\text{exp}(\gamma_c)\big) \\
\end{bmatrix}\\
\nonumber & = & -\mathds{1}_c^T\mL(x^*)^T\Exp(\gamma) \\
\nonumber & = & -\big(\mL(x^*)\mathds{1}_c\big)^T\Exp(\gamma)=0.
\end{eqnarray}
since $\mL(x^*)$ is balanced. 

Furthermore, equality occurs in inequality (\ref{ineq:1}) only when each of the terms within the summation on the left hand side is equal to the corresponding term within the summation on the right hand side. Since $k_{ij}>0, i \neq j,$ if the $\ith$ complex reacts to the $\jth$ complex, 
it follows that $\gamma_i=\gamma_j$ for each such $i,j$, which is equivalent to $B^T\gamma=0$. 
\end{proof}

\begin{proof} (of Theorem \ref{th:char})
The dynamics of the complex-balanced network with $c$ complexes are given by
\[
\dot{x}=-Z\mL(x^*)\Exp\left(Z^T\Ln\left(\frac{x}{x^*}\right)\right)
\]
where $Z \in \bar{\mR}^{m \times c}_+$ and $\mL(x^*) \in \mR^{c \times c}$ are as defined in the previous section. Let $B \in \mR^{c \times r}$ denote the incidence matrix of the graph of complexes associated with the network. Assume that the reaction network has $\ell$ linkage classes. Assume that the $\ith$ linkage class has $r_i$ reactions between $c_i$ complexes. Partition $Z$, $B$ and $\mL$ matrices according to the various linkage classes present in the network as in Section \ref{sec:link}. Define $S_i:=Z_iB_i$ for $i=1,\ldots,\ell$.

\medskip{}

(\emph{1}.)

(\emph{Only If}): Assume that $x^{**}$ is an equilibrium, that is
\begin{equation}\label{eq:dx}
-Z\mL(x^*)\Exp \left(Z^T\Ln \left(\frac{x^{**}}{x^*}\right) \right)=0
\end{equation} 
Define $\gamma:=Z^T\Ln\left(\frac{x^{**}}{x^*}\right)$. Premultiplying equation (\ref{eq:dx}) with $\Ln\left(\frac{x^{**}}{x^*}\right)$, we get
\begin{equation*}
-\gamma^T\mL(x^*)\Exp(\gamma)=0.
\end{equation*}
Hence by Lemma \ref{lemma}, $B^T\gamma=0$ and thus $S^T\Ln\left(\frac{x^{**}}{x^*}\right)=0$.
\medskip{}

(\emph{If}) Assume that $S^T\Ln\left(\frac{x^{**}}{x^*}\right)=0$. Hence for every linkage class $i=1,\ldots,\ell$, $S_i^T\Ln\left(\frac{x^{**}}{x^*}\right)=0$, or, equivalently, $B_i^T\gamma = 0$. This implies that $\gamma_i=\gamma_j$ if the $\ith$ complex reacts to the $\jth$ complex or vice-versa. This in turn implies that for every linkage class $i=1,\ldots,\ell$, $Z_i^T\Ln\left(\frac{x^{**}}{x^*}\right)$ consists of equal entries, or in other words it can be written as 
\[
Z_i^T\Ln\left(\frac{x^{**}}{x^*}\right)=\Gamma_i \mathds{1}_{c_i}
\] 
where $\Gamma_i \in \mR$.

Since $x^*$ is a complex-equilibrium, $\mL(x^*)\mathds{1}_{c}=0$. This implies that for $i=1,\ldots,\ell$,  $\mL_i(x^*)\mathds{1}_{c_i}=0$. Now, by evaluating the RHS of (\ref{eq:link}) at $x^{**}$, we have 
\[
-\sum_{i=1}^{\ell}Z_i\mL_i(x^*)\Exp\left(Z_i^T\Ln\left(\frac{x^{**}}{x^*}\right)\right)=-\sum_{i=1}^{\ell}\text{exp}(\Gamma_i)Z_i\mL_i(x^*)\mathds{1}_{c_i}=0. 
\]
\medskip{}

(\emph{2}.) Let $x^{**}$ denote an equilibrium as in the proof of the earlier part. Then $S^T\Ln\left(\frac{x^{**}}{x^*}\right)=0$. We prove that $x^{**}$ is a complex-equilibrium. As shown earlier, $Z_i^T\Ln\left(\frac{x^{**}}{x^*}\right)=\Gamma_i \mathds{1}_{c_i}$ for $i=1,\ldots,\ell$. Since $\mL_i(x^*)\mathds{1}_{c_i}=0$, this implies that (c.f., the discussion that preceeds (\ref{sontag}) and the form in (\ref{eq:link}))
\[
-Bv(x^{**})=\mL(x^*)\Exp\left(Z^T\Ln\left(\frac{x^{**}}{x^*}\right)\right)=\begin{bmatrix}
\mL_1(x^*)\Exp\left(Z_1^T\Ln\left(\frac{x^{**}}{x^*}\right)\right)\\
\vdots \\
\mL_{\ell}(x^*)\Exp\left(Z_{\ell}^T\Ln\left(\frac{x^{**}}{x^*}\right)\right)
\end{bmatrix}=0
\]
From the above equation, it follows that $x^{**}$ is a complex-equilibrium.
\end{proof}

\begin{remark}\rm \label{rem:ZD}
The steps followed in the proof of the above theorem are very similar to the proof of the characterization of the space of equilibria of a class of networks known as \emph{zero-deficiency networks} presented in \cite{Feinberg1}. In the next subsection, we define zero-deficiency networks and prove that every zero-deficiency network that admits an equilibrium is complex-balanced. We emphasize here that the proof of Theorem \ref{th:char} that is presented in this paper is much more simple as compared to similar proofs provided in \cite{Feinberg1} due to the use of the properties of the weighted Laplacian in the present manuscript.
\end{remark}

One may wonder to what extent the balanced weighted Laplacian matrix $\mL(x^*)$ depends on the choice of the complex-equilibrium $x^*$. This dependency turns out to be very minor, strengthening the importance of this matrix for the analysis of the network. Indeed, consider any other complex-equilibrium $x^{**}$. Then $S^T \Ln x^{**} = S^T \Ln x^*$, or equivalently $B^TZ^T \Ln x^{**} = B^TZ^T \Ln x^*$. Hence for the $i$-th connected component of the complex graph we have $B_i^TZ_i^T \Ln x^{**} = B_i^TZ_i^T \Ln x^*$, or equivalently, since $\ker B_i^T = \spa \mathds{1}$,
\begin{equation}
Z_i^T \Ln x^{**} = Z_i^T \Ln x^* + c_i \mathds{1}
\end{equation}
for some constant $c_i$. Thus from the definition of $\mL$, it follows that $\mL_i(x^{**}) = d_i \mL_i(x^*)$ for some positive constant $d_i$. Hence, on every connected component of the graph of complexes, the balanced weighted Laplacian matrix $\mL(x^*)$ is {\it unique} up to multiplication by a positive constant.  
\subsection{Zero-deficiency networks}
We now introduce the notion of zero-deficient chemical reaction networks mentioned in Remark \ref{rem:ZD}. This notion was introduced in the work of Feinberg \cite{Feinberg2} in order to relate the stoichiometry of a given network to the structure of the associated graph of complexes. 
\begin{definition}
The deficiency $\delta$ of a chemical reaction network with complex-stoichiometric matrix $Z$, incidence matrix $B$ and stoichiometric matrix $S$ is defined as
\begin{equation}\label{eq:def}
\delta := \rank (B) - \rank (ZB) = \rank (B) - \rank (S) \geq 0
\end{equation}
A reaction network has {\it zero-deficiency} if $\delta =0$. 
\end{definition}
Note that zero-deficiency is equivalent with ker$(Z)\cap$ im$(B)=0$, or with the mapping
\[
Z: \im B \subset \mathbb{R}^c \to \mathbb{R}^m
\] 
being {\it injective}. 
\begin{remark}\rm
The deficiency of a chemical reaction network has been defined in a different way in \cite{Feinberg2}. Denote by $\ell$ the number of linkage classes of a given chemical reaction network. Note that $\ell=c-\text{rank}(B)$ as explained in Section \ref{sec:link}. In \cite{Feinberg2}, deficiency $\delta$ is defined as
\begin{equation}\label{eq:def1}
\delta:=c-\ell-\text{rank}(S)
\end{equation}
It is easy to see that definitions (\ref{eq:def}) and (\ref{eq:def1}) are equivalent.
\end{remark}

We now prove that every zero-deficiency network that admits an equilibrium is complex-balanced. Consequently all the results that we state for a complex-balanced network also hold for a zero-deficiency network that admits an equilibrium.
\begin{lemma}
If a chemical reaction network is zero-deficient and admits an equilibrium, then it is complex-balanced.
\end{lemma}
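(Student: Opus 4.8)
The plan is to exploit the characterization of zero-deficiency recorded immediately before the lemma: $\delta = 0$ is equivalent to $\ker Z \cap \im B = \{0\}$, i.e.\ to $Z$ being injective on $\im B$. The key conceptual point is that being an \emph{equilibrium} forces only the condition $ZBv(x^*)=0$, whereas being a \emph{complex-equilibrium} requires the stronger $Bv(x^*)=0$; the discrepancy between the two conditions is measured precisely by $\ker Z \cap \im B$, which collapses to $\{0\}$ under zero-deficiency. I therefore expect that no new concentration vector need be produced---the given equilibrium will itself turn out to be a complex-equilibrium.

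Concretely, I would start from an equilibrium $x^* \in \mR_+^m$, which exists by hypothesis. By the balance law $\dot{x} = ZBv(x)$, the equilibrium condition reads $ZBv(x^*)=0$, so the vector $Bv(x^*)$ lies in $\ker Z$. On the other hand $Bv(x^*)$ is, by construction, $B$ applied to the flux vector $v(x^*)$, and hence lies in $\im B$. Combining these two memberships gives $Bv(x^*) \in \ker Z \cap \im B$. Invoking zero-deficiency in the form $\ker Z \cap \im B = \{0\}$ then forces $Bv(x^*)=0$, which is exactly the definition of $x^*$ being a complex-equilibrium. Therefore the network admits a complex-equilibrium and is complex-balanced.

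I do not anticipate a genuine obstacle: the argument is a one-line linear-algebra observation once one notices that $Bv(x^*)$ automatically lies in $\im B$ and, at equilibrium, also in $\ker Z$. The only thing demanding care is keeping the distinction between the equilibrium condition $ZBv=0$ and the complex-equilibrium condition $Bv=0$ in sharp focus, together with the remark that the very concentration $x^*$ witnessing the equilibrium is already the witness for complex-balancedness---so the statement follows without constructing anything new.
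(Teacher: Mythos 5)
Your proposal is correct and follows exactly the paper's own argument: take the given equilibrium $x^*$, observe that $ZBv(x^*)=0$ places $Bv(x^*)$ in $\ker Z \cap \im B$, and invoke zero-deficiency to conclude $Bv(x^*)=0$, so $x^*$ is itself a complex-equilibrium. No differences worth noting.
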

\begin{proof}
Consider a zero-deficient network with complex-stoichiometric matrix $Z \in \bar{\mR}_+^{m \times c}$ and incidence matrix $B \in \mR^{c \times r}$. Let $x^* \in \mR_+^{m}$ denote an equilibrium for the given network. Then $Sv(x^*)=ZBv(x^*)=0$ and hence by zero-deficiency $Bv(x^*)=0$. Consequently $x^*$ is a complex-equilibrium and the network is complex-balanced.
\end{proof}

The above lemma has been stated and proved earlier in \cite[Theorem 4.1, p. 192]{Feinberg2} in a different and more lengthy manner.
\begin{remark}\rm \label{remark}
For McKeithan's network it is easily seen that $Z$ itself is already injective, thus implying zero-deficiency.
\end{remark}

\subsection{Asymptotic stability}
We now show global asymptotic stability of complex-balanced networks.
\begin{theorem}\label{th:Lyap}
Consider a complex-balanced network with stoichiometric matrix $S \in \mR^{m \times r}$, an equilibrium $x^* \in \mR_+^m$ and dynamics given by equation $($\ref{eq:standform}$)$. Assume that the network obeys the law of conservation of mass stated in Remark \ref{rem:LCM}. Then for every initial concentration vector $x_0 \in \mR_+^m$, the species concentration vector $x$ converges as $t \rightarrow \infty$ to an element of the set
\begin{equation}\label{eq:epsilon}
\mathcal{E} :=\{x^{**} \in \mR_+^m \mid S^T\Ln (x^{**}) = S^T\Ln(x^*)\}.
\end{equation}
\end{theorem}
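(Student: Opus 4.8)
The plan is to exhibit a strict Lyapunov function of Gibbs free-energy type and then invoke LaSalle's invariance principle, using Lemma \ref{lemma} to identify the set on which its time-derivative vanishes. Concretely, I would take
\[
G(x) = \sum_{i=1}^m \left( x_i \ln\frac{x_i}{x_i^*} - x_i + x_i^* \right),
\]
which is well defined and strictly convex on $\mR_+^m$ (its Hessian is $\mbox{diag}(1/x_1,\ldots,1/x_m)$) and extends continuously to the boundary of the positive orthant since $x_i\ln x_i \to 0$ as $x_i \to 0^+$. A direct computation gives $\nabla G(x) = \Ln\left(\frac{x}{x^*}\right)$, so that along solutions of (\ref{eq:standform}),
\[
\dot G = \Ln\left(\frac{x}{x^*}\right)^T \dot x = -\gamma^T \mL(x^*)\Exp(\gamma), \qquad \gamma := Z^T\Ln\left(\frac{x}{x^*}\right).
\]
By Lemma \ref{lemma} this is nonpositive, and it vanishes precisely when $B^T\gamma = 0$, i.e.\ when $B^TZ^T\Ln\left(\frac{x}{x^*}\right) = S^T\Ln\left(\frac{x}{x^*}\right) = 0$; by Theorem \ref{th:char} this is exactly the condition $x \in \mathcal{E}$. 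Thus $G$ is a Lyapunov function whose dissipation set equals $\mathcal{E}$.

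Next I would secure the compactness needed for LaSalle. Every solution remains in the positive stoichiometric compatibility class $\mathcal{S}_{x_0}$ of (\ref{eq:PSCC}), and the law of conservation of mass (Remark \ref{rem:LCM}) supplies $u \in \mR_+^m$ with $u^Tx(t) = u^Tx_0$ constant along trajectories; since all entries of $u$ are strictly positive, this bounds each $x_i(t)$ and confines the trajectory to a bounded subset of $\mR_+^m$. Because $\dot G \le 0$, the trajectory also stays in the sublevel set $\{x : G(x) \le G(x_0)\}$. The closure of the intersection of this sublevel set with $\{x : u^Tx = u^Tx_0\}$ is then a compact, positively invariant set to which I would apply LaSalle: the $\omega$-limit set lies in the largest invariant subset of $\{\dot G = 0\}$, and since every point of $\mathcal{E}$ is an equilibrium (Theorem \ref{th:char}), the trajectory approaches $\mathcal{E}$. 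To upgrade this to convergence to a single point I would use strict convexity: each $x^{**}\in\mathcal{E}$ satisfies $\nabla G(x^{**}) = \Ln\left(\frac{x^{**}}{x^*}\right) \perp \im S$, hence is a critical point of $G$ restricted to the affine class $\mathcal{S}_{x_0}$, and strict convexity forces this critical point to be unique, so $\mathcal{E} \cap \mathcal{S}_{x_0}$ is a singleton toward which the bounded trajectory must converge.

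The hard part will be the boundary of $\mR_+^m$. Conservation of mass prevents escape to infinity but not, a priori, approach to a face where some $x_i \to 0$; there the gradient $\Ln\left(\frac{x}{x^*}\right)$ blows up while $G$ itself remains finite, so the compact invariant set constructed above may meet $\partial\mR_+^m$ and LaSalle could in principle yield a boundary $\omega$-limit point rather than the interior equilibrium. Ruling this out is precisely the persistence issue flagged among the keywords. I would handle it either by arguing directly that no boundary point of the relevant invariant set can be a complex-equilibrium compatible with $x_0$, or by stating convergence relative to the target set $\mathcal{E}$ as in the theorem, so that the remaining interior-versus-boundary dichotomy is absorbed into the formulation of the result rather than left as a gap in the Lyapunov argument.
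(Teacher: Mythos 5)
Your overall strategy coincides with the paper's: the same Lyapunov function $G(x)=\sum_i\bigl(x_i\ln\frac{x_i}{x_i^*}-x_i+x_i^*\bigr)$, the same computation $\dot G=-\gamma^T\mL(x^*)\Exp(\gamma)$ with $\gamma=Z^T\Ln\left(\frac{x}{x^*}\right)$, the same appeal to Lemma \ref{lemma} to get $\dot G\le 0$ with equality exactly on $\mathcal{E}$, and LaSalle to conclude. (Your bound on the trajectory via the conserved moiety $u^Tx$ is fine; the paper instead uses properness of $G$ on $\bar{\mR}_+^m$. Your final paragraph on uniqueness of $\mathcal{E}\cap\mathcal{S}_{x_0}$ is not needed for this statement --- that is the content of the separate Theorem \ref{th:unique}.)

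However, the boundary issue you flag at the end is a genuine gap, and neither of your two proposed escape routes closes it. The second one fails outright: $\mathcal{E}$ is by definition a subset of the \emph{open} orthant $\mR_+^m$, so ``convergence to an element of $\mathcal{E}$'' cannot absorb the possibility of $\omega$-limit points on $\partial\mR_+^m$; moreover $\Ln\left(\frac{x}{x^*}\right)$ and hence the identification $\{\dot G=0\}=\mathcal{E}$ only make sense in the interior, so LaSalle as you have set it up is not even applicable on a compact set that meets the boundary. The first route is the right one, but it requires the concrete argument the paper supplies: if $x_i=0$ then every complex containing species $i$ has vanishing mass-action monomial, so in $\dot x_i=-Z^i\mL(x^*)\Exp\left(Z^T\Ln (x)\right)$ all terms multiplying the positive $i$-th diagonal entries of $\mL(x^*)$ vanish while at least one term multiplying a strictly negative off-diagonal entry survives, giving $\dot x_i>0$; this persists at points sufficiently close to the face $\{x_i=0\}$ away from the origin, and the origin itself is unreachable because $u^Tx$ is conserved with $u\in\mR_+^m$. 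This is what makes $\mR_+^m$ forward invariant, keeps the $\omega$-limit set in the interior, and legitimizes the LaSalle conclusion. Without it (or some equivalent persistence argument) your proof does not establish convergence to $\mathcal{E}$.
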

\begin{proof}
Define
\begin{equation}\label{eq:Lyap}
G(x)=x^T\Ln\left(\frac{x}{x^{*}}\right)+(x^*-x)^T\mathds{1}_m
\end{equation}
Observe that $G(x^*)=0$\footnote{$G$ defined by (\ref{eq:Lyap}) is a standard Lyapunov function used in chemical reaction network theory (see for example \cite{Feinberg1}).}. We prove that 
\begin{equation}\label{eq:lyap}
G(x) > 0  \quad \forall x \neq x^{*},
\end{equation}
and is {\it proper}, i.e., for every real $ c >0$ the set $\{x\in \bar{\mR}_+^{m} \mid G(x) \leq c\}$ is compact. Let $x_i$ and $x_i^*$ denote the $\ith$ elements of $x$ and $x^*$ respectively. From the strict concavity of the logarithmic function, 
\begin{equation}\label{eq:z}
z-\ln(z) \geq 1
\end{equation}
$\forall$ $z \in \mR_+$ with equality occuring only when $z=1$. Putting $z=\frac{x_i^*}{x_i}$ in equation (\ref{eq:z}), we get
\[
x_i^*-x_i+x_i\ln\left(\frac{x_i}{x_i^*}\right) \geq 0
\] 
This implies that
\[
G(x)=\sum_{i=1}^m\left[x_i^*-x_i+x_i\ln\left(\frac{x_i}{x_i^*}\right)\right] \geq 0.
\]
with equality occuring only when $x=x^*$, thus proving (\ref{eq:lyap}). Properness of $G$ is readily checked.

We first prove that 
\begin{equation}\label{lyap1}
\dot{G}(x) := \frac{\partial G}{\partial x}(x)\dot{x} \leq 0 \qquad \forall x \in \mR_+^{m},
\end{equation}
and
\begin{equation}\label{lyap2}
\dot{G}(x) = 0 \mbox{ if and only if } x\in \mathcal{E}.
\end{equation}
Observe that
\[
\dot{G}(x) 
=-\Ln\left(\frac{x}{x^*}\right)^TZ^T\mL(x^*)\Exp\left(Z^T\Ln\left(\frac{x}{x^*}\right)\right)
\]
Defining $\gamma:=Z^T\Ln\left(\frac{x}{x^*}\right)$ we thus obtain
\[
\dot{G}(x)=-\gamma^T\mL(x^*)\Exp(\gamma)
\]
and the statement follows from Lemma \ref{lemma}.
%

We now prove that $\mR_+^m$ is forward invariant with respect to (\ref{eq:standform}). Assume by contradiction that this is not the case, and that $x_{i}(t)=0$ for some $t$ and $i \in \{1, \ldots, m\}$. Without loss of generality assume that the species with concentration $x_i$ is present in at least one complex which is involved in a reaction. Let $\mathcal{C}_i$ be the subset of complexes which contain $x_i$,  and denote by $Z_{\mathcal{C}_i}$ the matrix containing the columns of $Z$ corresponding to the complexes in $\mathcal{C}_i$. Hence all elements of the $i$-th row $Z_{\mathcal{C}_i}$ are different from zero. Then it follows that $\prod_{j=1}^m x_j^{Z_{jC}}=0$ if $C \in \mathcal{C}_i$ and thus 
\begin{equation*}
\dot x_{i}(t)  = - Z^{i} \mL(x^*) \begin{bmatrix}
\prod_{j=1}^m x_j(t)^{Z_{jC_1}}\\
\vdots \\
\prod_{j=1}^m x_j(t)^{Z_{jC_c}}
\end{bmatrix} > 0,
\end{equation*}  
where $Z^i$ is the $i$-th row vector of $Z$. This inequality is due to the fact that the terms corresponding to the positive $i$-th diagonal element of the weighted Laplacian matrix $\mL(x^*)$ are all zero, while there is at least one term corresponding to a non-zero, and therefore strictly negative, off-diagonal element of $\mL(x^*)$. It is easy to see that the inequality also holds at points arbitrarily close to the boundaries of the positive orthant $\mR_+^m$ except the origin. Recall that according to the law of conservation of mass, there exists $u \in \mR_+^m$ such that $u^Tx$ is a conserved moiety. Consequently, starting from an initial concentration vector $x_0\in \mR_+^m$, the state trajectory $x(\cdot)$ can not reach the origin. This implies that $\mR_+^m$ must be forward invariant with respect to (\ref{eq:standform}). 

Since $G$ is proper (in $\bar{\mR}_+^{m}$) and the state trajectory $x(\cdot)$ remains in $\mR_+^m$, (\ref{lyap1}) implies that $x(\cdot)$ is bounded in $\mR_+^m$. Therefore, boundedness of $x(\cdot)$, together with equations (\ref{lyap1}) and (\ref{lyap2}), imply that the species concentration $x$ converges to an element of the set $\mathcal{E}$ by an application of LaSalle's invariance principle.
\end{proof}

\begin{remark}\rm
The crux of the proofs of Theorems \ref{th:char} and \ref{th:Lyap} is the inequality $\gamma^T \mL(x^*) \Exp (\gamma) \geq 0, \text{ for all } \gamma \in \mR$.
This inequality holds because of balancedness of $\mL$ and we make use of the convexity of the exponential function in order to prove it. 
\end{remark}

\begin{remark}\rm
By proving that $\mR_+^m$ is forward invariant with respect to (\ref{eq:standform}), we have actually proved the \emph{persistence conjecture} stated in \cite[p. 1488]{GAC} of complex-balanced networks obeying the law of conservation of mass. Persistence conjecture roughly corresponds to the requirement of nonextinction of species concentration of a complex-balanced network. Forward invariance of $\mR_+^m$ with respect to (\ref{sontag}) has been proved in \cite[Section VII]{Sontag} for the case when the complex-stoichiometric matrix $Z$ is injective. Proving invariance of $\mathbb{R}^m_+$ is an intricate problem for general mass action kinetics; see e.g. \cite{Angeli2011} and the references quoted therein.
\end{remark}

\subsection{Equilibrium concentration corresponding to an initial concentration}
In this section, we show that corresponding to every positive stoichiometric compatibility class (see equation (\ref{eq:PSCC}) for a definition) of a complex-balanced chemical reaction network, there exists a unique complex-equilibrium in $\mathcal{E}$ defined by equation (\ref{eq:epsilon}). The proof that we provide for this result is very similar to the proof of the zero-deficiency theorem provided in \cite{Feinberg1} and is based on the following proposition therein. Recall from the Introduction that the product $x \cdot z \in \mathbb{R}^m$ is defined element-wise. 
\begin{proposition}\label{prop:uniq}
Let $U$ be a linear subspace of $\mR^m$, and let $x^*,x_0 \in \mR_+^m$. Then there is a unique element $\mu \in U^{\perp}$, such that $\big(x^* \cdot \Exp(\mu)-x_0\big) \in U$.
\end{proposition}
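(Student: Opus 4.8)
The plan is to obtain $\mu$ as the unique minimizer of a strictly convex function over the subspace $U^\perp$, and to read off the required membership condition from the first-order optimality condition. Concretely, I would define $g : U^\perp \to \mR$ by
\[
g(\mu) = \mathds{1}_m^T\big(x^* \cdot \Exp(\mu)\big) - x_0^T\mu = \sum_{i=1}^m x_i^* \exp(\mu_i) - x_0^T \mu .
\]
A direct computation gives gradient $\nabla g(\mu) = x^* \cdot \Exp(\mu) - x_0$ and Hessian $\mathrm{diag}\big(x_i^* \exp(\mu_i)\big)$, which is positive definite because $x^* \in \mR_+^m$; hence $g$ is strictly convex on $\mR^m$ and a fortiori on the subspace $U^\perp$. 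The key observation is that minimizing $g$ over $U^\perp$ has first-order optimality condition $\langle \nabla g(\mu), d\rangle = 0$ for all $d \in U^\perp$, i.e. $\nabla g(\mu) \in (U^\perp)^\perp = U$; since $\nabla g(\mu) = x^* \cdot \Exp(\mu) - x_0$, this is precisely the desired condition $\big(x^* \cdot \Exp(\mu) - x_0\big) \in U$. Thus it suffices to show that $g$ attains a minimum on $U^\perp$.

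The main obstacle is exactly this existence of a minimizer: strict convexity alone is not enough (the scalar function $e^{\mu}$ is strictly convex yet has no minimizer), so I must show $g$ is proper/coercive on $U^\perp$, i.e. $g(\mu)\to +\infty$ as $\|\mu\|\to\infty$ in $U^\perp$. Because $g$ is convex, this reduces to showing that its recession function $g_\infty(d) = \lim_{t\to\infty} g(td)/t$ is strictly positive for every nonzero $d \in U^\perp$. If some component $d_i > 0$, then $x_i^*\exp(td_i)$ grows exponentially and $g_\infty(d) = +\infty$; if instead all $d_i \le 0$ (with $d \neq 0$), the exponential sum $\sum_i x_i^* \exp(td_i)$ stays bounded while $-t\,x_0^T d = t\sum_i x_{0,i}(-d_i)$ forces $g_\infty(d) = -x_0^T d = \sum_i x_{0,i}(-d_i) > 0$, using $x_0 \in \mR_+^m$ together with $-d_i \ge 0$ and at least one strict inequality. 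Hence $g_\infty(d) > 0$ for all nonzero $d \in U^\perp$, so $g$ is coercive; being continuous and coercive on the closed set $U^\perp$, it attains a minimum, which is unique by strict convexity. This yields existence of the required $\mu$.

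For uniqueness I would give a direct monotonicity argument independent of the variational setup. Suppose $\mu_1, \mu_2 \in U^\perp$ both satisfy $\big(x^* \cdot \Exp(\mu_j) - x_0\big) \in U$. Subtracting, $x^* \cdot \Exp(\mu_1) - x^* \cdot \Exp(\mu_2) \in U$, while $\mu_1 - \mu_2 \in U^\perp$; hence their inner product vanishes:
\[
0 = (\mu_1 - \mu_2)^T\big(x^* \cdot \Exp(\mu_1) - x^* \cdot \Exp(\mu_2)\big) = \sum_{i=1}^m x_i^* (\mu_{1,i} - \mu_{2,i})\big(\exp(\mu_{1,i}) - \exp(\mu_{2,i})\big).
\]
Each summand is nonnegative, since $x_i^* > 0$ and $\exp$ is strictly increasing, and vanishes only when $\mu_{1,i} = \mu_{2,i}$; the sum being zero forces $\mu_1 = \mu_2$. (Uniqueness also follows immediately from strict convexity, since any such $\mu$ is a stationary point and hence the unique global minimizer of $g$.) The only routine points I would still verify are that the optimality condition is taken relative to the subspace $U^\perp$ and that $(U^\perp)^\perp = U$, which holds because $U$ is a subspace of the finite-dimensional space $\mR^m$.
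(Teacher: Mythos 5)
Your proof is correct. Note that the paper does not actually prove this proposition itself: its ``proof'' is a citation to Feinberg's Proposition B.1, and the argument given there is essentially the one you reconstruct --- minimize the strictly convex function $\mu \mapsto \sum_{i=1}^m x_i^*\exp(\mu_i) - x_0^T\mu$ over the subspace $U^{\perp}$, with the required membership $\big(x^*\cdot\Exp(\mu)-x_0\big)\in U$ appearing as the stationarity condition $\nabla g(\mu)\perp U^{\perp}$, i.e.\ $\nabla g(\mu)\in (U^{\perp})^{\perp}=U$. The one delicate point is existence of the minimizer, and you handle it correctly: coercivity on $U^{\perp}$ via the recession function, splitting into the case where some $d_i>0$ (exponential growth) and the case where all $d_i\le 0$ with $d\neq 0$ (where $-x_0^Td>0$ because $x_0\in\mR_+^m$), is precisely where the strict positivity of $x^*$ and $x_0$ is used. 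Your separate monotonicity argument for uniqueness is a clean, slightly more elementary alternative to invoking strict convexity, but both are valid.
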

\begin{proof}
See proof of \cite[Proposition B.1, pp. 361-363]{Feinberg1}.
\end{proof} 
\begin{theorem}\label{th:unique}
Consider the complex-balanced chemical reaction network with dynamics given by equation $($\ref{eq:standform}$)$ and equilibrium set $\mathcal{E}$. Then for every $x_0 \in \mR_+^m$ there exists a unique $x_1 \in \mathcal{E} \cap \mathcal{S}_{x_0}$ with $\mathcal{S}_{x_0}$ given by $($\ref{eq:PSCC}$)$, and the solution trajectory $x(\cdot)$ of  $($\ref{eq:standform}$)$ with initial condition $x(0)=x_0$ converges for $t \to \infty$ to $x_1$.
\end{theorem}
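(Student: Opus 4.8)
The plan is to combine the existence-and-uniqueness statement of Proposition \ref{prop:uniq} with the convergence result of Theorem \ref{th:Lyap}. The two halves of the claim are logically separate: first that the intersection $\mathcal{E} \cap \mathcal{S}_{x_0}$ consists of exactly one point, and second that the trajectory actually converges to that point. I would treat them in that order.

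For the existence and uniqueness of $x_1$, the key is to identify the right subspace $U$ to which Proposition \ref{prop:uniq} is applied. The natural choice is $U = \im S$. With this choice, $\mathcal{S}_{x_0} = \{x \in \mR_+^m \mid x - x_0 \in \im S\}$ is precisely the set $\{x : x - x_0 \in U\}$, and $U^{\perp} = (\im S)^{\perp} = \ker S^T$. Proposition \ref{prop:uniq} then yields a unique $\mu \in \ker S^T$ with $x^* \cdot \Exp(\mu) - x_0 \in \im S$. Setting $x_1 := x^* \cdot \Exp(\mu)$, membership in $\mathcal{S}_{x_0}$ is immediate, while $\mu \in \ker S^T$ translates, via $\mu = \Ln\left(\frac{x_1}{x^*}\right)$ (using $\Ln\left(\frac{x \cdot z}{z}\right) = \Ln(x)$ with the element-wise conventions from the Introduction), into $S^T \Ln\left(\frac{x_1}{x^*}\right) = 0$, i.e.\ $S^T \Ln(x_1) = S^T \Ln(x^*)$, so that $x_1 \in \mathcal{E}$ by the definition \eqref{eq:epsilon}. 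Conversely, any point of $\mathcal{E} \cap \mathcal{S}_{x_0}$ produces, via the same substitution, an element $\mu' = \Ln\left(\frac{x_1'}{x^*}\right) \in \ker S^T = U^{\perp}$ with $x^* \cdot \Exp(\mu') - x_0 \in U$; the uniqueness clause of Proposition \ref{prop:uniq} forces $\mu' = \mu$, hence $x_1' = x_1$. This establishes that $\mathcal{E} \cap \mathcal{S}_{x_0}$ is the single point $x_1$.

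For the convergence, I would invoke Theorem \ref{th:Lyap}: starting from $x_0 \in \mR_+^m$, the trajectory $x(\cdot)$ converges to some element of $\mathcal{E}$. It remains only to observe that the limit lies in $\mathcal{S}_{x_0}$. This follows because, by the balance law $\dot{x} = Sv(x)$, every solution stays in the affine space through $x_0$ parallel to $\im S$, so $x(t) \in \mathcal{S}_{x_0}$ for all $t$, and $\mathcal{S}_{x_0}$ is closed; hence the limit also lies in $\mathcal{S}_{x_0}$. The limit therefore belongs to $\mathcal{E} \cap \mathcal{S}_{x_0} = \{x_1\}$, and by uniqueness the trajectory converges to $x_1$.

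I do not anticipate a serious obstacle here, since the heavy lifting—the variational/convexity argument yielding existence and uniqueness of $\mu$, and the LaSalle-based convergence—is already packaged in Proposition \ref{prop:uniq} and Theorem \ref{th:Lyap} respectively. The one point requiring care is the bookkeeping that matches the abstract subspace $U$ of the proposition to $\im S$ and correctly identifies the exponential parametrization $x_1 = x^* \cdot \Exp(\mu)$ with the logarithmic condition $S^T \Ln\left(\frac{x_1}{x^*}\right) = 0$ defining $\mathcal{E}$; a subtlety worth checking is that Theorem \ref{th:Lyap} only guarantees convergence to the \emph{set} $\mathcal{E}$, so the argument genuinely needs the invariance of $\mathcal{S}_{x_0}$ to pin the limit down to the unique intersection point rather than merely to $\mathcal{E}$.
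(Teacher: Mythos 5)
Your proposal is correct and follows essentially the same route as the paper: apply Proposition \ref{prop:uniq} with $U=\im S$, set $x_1:=x^*\cdot \Exp(\mu)$, check $x_1\in\mathcal{E}\cap\mathcal{S}_{x_0}$, and combine Theorem \ref{th:Lyap} with the invariance of $\mathcal{S}_{x_0}$ to identify the limit. Your explicit verification of the uniqueness direction (that any point of $\mathcal{E}\cap\mathcal{S}_{x_0}$ yields the same $\mu$) is slightly more detailed than the paper's, but the argument is the same.
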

\begin{proof}
With reference to Proposition \ref{prop:uniq}, define $U=\im S$, and observe that $U^{\perp}=\ker S^T$. By Proposition \ref{prop:uniq}, there exists a unique $\mu \in \ker S^T$ such that $x^* \cdot \Exp(\mu)-x_0 \in \im S$. Define $x_1:=x^* \cdot \Exp(\mu) \in \mR_+^m$. It follows that $S^{T}\mu=S^T\Ln\left(\frac{x_1}{x^*}\right)=0$, i.e., $x_1 \in \mathcal{E}$. Also, since $x_1-x_0 \in \im S$, $x_1 \in \mathcal{S}_{x_0}$ which is an invariant set of the dynamics.
Together with Theorem \ref{th:Lyap} it follows that the state trajectory $x(\cdot)$ with initial condition $x(0)=x_0$ converges to the equilibrium $x_1 \in \mathcal{E}$. 
\end{proof}

\section{Model reduction}

For biochemical reaction networks, model-order reduction is still underdeveloped. The singular perturbation method and quasi steady-state approximation (QSSA) approach are the most commonly used techniques, where the reduced state contains a part of the metabolites of the full model. In the thesis by H\"ardin \cite{Hardin2010}, the QSSA approach is extended by considering higher-order approximation in the computation of quasi steady-state. Sunn\aa ker {\it et al.} in \cite{Sunnaker2011} proposed a reduction method by identifying variables that can be lumped together and can be used to infer back the original state. In Prescott \& Papachristodoulou \cite{Prescott2012}, a method to compute the upper-bound of the error is proposed based on sum-of-squares programming. The application of these techniques to general kinetics laws, such as Michaelis-Menten, poses a significant computational problem.  


In this section, we propose a novel and simple method for model reduction of complex-balanced chemical reaction networks governed by mass action kinetics. Our method is based on the Kron reduction method for model reduction of resistive electrical networks described in \cite{Kron}; see also \cite{vdsSCL}. Moreover, the resulting reduced-order model retains the structure of the kinetics and gives result to a reduced biochemical reaction network, which enables a direct biochemical interpretation. 

\subsection{Description of the method}
Our model reduction method is based on \emph{reduction of the graph of complexes} associated with the network. It is based on the following result regarding Schur complements of weighted Laplacian matrices.
\begin{proposition}\label{prop:WL}
Consider a complex-balanced network with a complex-equilibrium $x^* \in \mR_+^m$ and weighted Laplacian matrix $\mL(x^*) \in \mR^{c \times c}$ corresponding to the equilibrium $x^*$. Let $\mathcal{V}$ denote the set of vertices of the graph of complexes associated with the network. Then for any subset of vertices $\mathcal{V}_r \subset \mathcal{V}$, the Schur complement $\hat{\mL}(x^*)$ of $\mL(x^*)$ with respect to the indices corresponding to $\mathcal{V}_r$ satisfies the following properties:
\begin{enumerate}
\item All diagonal elements of $\hat{\mL}(x^*)$ are positive and off-diagonal elements are nonnegative.
\item $\mathds{1}_{\hat{c}}^T\hat{\mL}(x^*)=0$ and $\hat{\mL}(x^*)\mathds{1}_{\hat{c}}=0$, where $\hat{c}:=c-\text{dim}(\mathcal{V}_r)$.
\end{enumerate} 
\end{proposition}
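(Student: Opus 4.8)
The plan is to exploit the $2\times 2$ block partition of $\mL(x^*)$ induced by the splitting $\mathcal{V}=\hat{\mathcal{V}}\cup\mathcal{V}_r$, where $\hat{\mathcal{V}}:=\mathcal{V}\setminus\mathcal{V}_r$ collects the retained vertices and $\mathcal{V}_r$ the eliminated ones. Writing
\[
\mL(x^*)=\begin{bmatrix}\mL_{11}&\mL_{12}\\ \mL_{21}&\mL_{22}\end{bmatrix},
\]
with the second block indexed by $\mathcal{V}_r$ and $d:=\dim(\mathcal{V}_r)$, the Schur complement is $\hat{\mL}(x^*)=\mL_{11}-\mL_{12}\mL_{22}^{-1}\mL_{21}$, so everything hinges on the block $\mL_{22}^{-1}$. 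The first and decisive step is therefore to establish that $\mL_{22}$ is invertible with an entrywise nonnegative inverse.

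For this I would first note that each linkage class is in fact strongly connected: balancedness ($\mathds{1}_c^T\mL=0$ together with $\mL\mathds{1}_c=0$) means that every vertex has equal total in- and out-weight, and summing this identity over the vertices of a terminal strongly connected component of a class gives that its combined in-weight equals its combined out-weight; as a terminal component it has no outgoing edges, hence it can have no incoming edges either, i.e. it is disconnected from the remaining vertices, which by weak connectedness forces it to be the whole class. Thus each diagonal block $\mL_i$ is an irreducible singular M-matrix with kernel $\spa(\mathds{1}_{c_i})$. Since $\mL$, and hence $\mL_{22}$, is block diagonal over the linkage classes and (as is implicit for the Schur complement to exist) no class is entirely contained in $\mathcal{V}_r$, every diagonal block of $\mL_{22}$ is a proper principal submatrix of some irreducible singular M-matrix. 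By standard M-matrix theory such a submatrix is a nonsingular M-matrix, so $\mL_{22}$ is invertible and $\mL_{22}^{-1}\ge 0$ entrywise. I expect this invertibility-with-nonnegative-inverse step to be the main obstacle; once it is in hand the two asserted properties follow almost mechanically.

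Property 2 is then a short block computation. Partition $\mathds{1}_c=[\,\mathds{1}_{\hat{c}}^{T}\ \ \mathds{1}_{d}^{T}\,]^{T}$. The identity $\mL\mathds{1}_c=0$ yields $\mL_{21}\mathds{1}_{\hat{c}}+\mL_{22}\mathds{1}_d=0$, i.e. $\mathds{1}_d=-\mL_{22}^{-1}\mL_{21}\mathds{1}_{\hat{c}}$; substituting this into $\hat{\mL}\mathds{1}_{\hat{c}}=\mL_{11}\mathds{1}_{\hat{c}}-\mL_{12}\mL_{22}^{-1}\mL_{21}\mathds{1}_{\hat{c}}$ gives $\hat{\mL}\mathds{1}_{\hat{c}}=\mL_{11}\mathds{1}_{\hat{c}}+\mL_{12}\mathds{1}_d=0$ by the first block-row of $\mL\mathds{1}_c=0$. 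The transposed manipulation, using $\mathds{1}_c^T\mL=0$ to obtain $\mathds{1}_d^{T}=-\mathds{1}_{\hat{c}}^{T}\mL_{12}\mL_{22}^{-1}$, yields $\mathds{1}_{\hat{c}}^{T}\hat{\mL}=0$.

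For property 1 I would propagate signs through $\hat{\mL}=\mL_{11}-\mL_{12}\mL_{22}^{-1}\mL_{21}$. The off-diagonal blocks $\mL_{12}$ and $\mL_{21}$ consist solely of off-diagonal entries of $\mL$ and are thus entrywise nonpositive, while $\mL_{22}^{-1}\ge 0$; hence $\mL_{12}\mL_{22}^{-1}\mL_{21}\ge 0$ entrywise (nonpositive $\times$ nonnegative $\times$ nonpositive). For $p\neq q$ this forces $\hat{\mL}_{pq}=(\mL_{11})_{pq}-(\text{nonnegative})\le 0$, so the off-diagonal entries keep the (nonpositive) Laplacian sign. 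The diagonal entries are then positive: having just shown that the column sums of $\hat{\mL}$ vanish, each diagonal entry equals minus the sum of the nonpositive off-diagonal entries of its column, hence is nonnegative, and is strictly positive because the reduced graph inherits connectivity from the original and so leaves no retained vertex without an outgoing reaction. Together these establish that $\hat{\mL}(x^*)$ is again a balanced weighted Laplacian, as required.
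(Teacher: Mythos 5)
Your proof is correct in substance, and for part 2 it coincides with the paper's argument: the same $2\times 2$ block partition, the same substitution of the block identities coming from $\mathds{1}_c^T\mL=0$ and $\mL\mathds{1}_c=0$ into $\hat{\mL}=\mL_{11}-\mL_{12}\mL_{22}^{-1}\mL_{21}$. Where you genuinely diverge is part 1: the paper simply cites an external reference (a master's thesis, and \cite{vdsSCL} for the symmetric case) and proves nothing, whereas you supply a self-contained argument. Your route buys two things the paper leaves implicit: (i) the observation that balancedness forces each linkage class to be strongly connected (your terminal-component argument is sound and is exactly the classical fact that complex-balanced networks are weakly reversible), which is what makes each $\mL_i$ an \emph{irreducible} singular M-matrix; and (ii) the identification of the hypothesis under which $\mL_{22}$ is invertible at all, namely that no linkage class is wholly contained in $\mathcal{V}_r$ --- a condition the proposition as stated silently assumes. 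The sign propagation $\mL_{12}\mL_{22}^{-1}\mL_{21}\geq 0$ then correctly yields nonpositive off-diagonal entries; note that the proposition's phrase ``off-diagonal elements are nonnegative'' must be read as referring to the weights $-\hat{\mL}_{pq}$ (cf.\ the sign convention of Lemma 4.2), so you are proving the intended statement.

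One step you should tighten: the strict positivity of the diagonal entries. From the vanishing column sums you only get $\hat{\mL}_{qq}\geq 0$, and your appeal to ``the reduced graph inherits connectivity from the original'' is asserted, not proved; it is a genuine (if standard) fact about Schur complements of irreducible Laplacians that deserves at least a sentence, e.g.\ by noting that $(\mL_{12}\mL_{22}^{-1}\mL_{21})_{pq}>0$ precisely when there is a directed path from $q$ to $p$ through eliminated vertices, so strong connectedness survives elimination. Moreover the claim genuinely fails in the degenerate case where all but one vertex of a linkage class is eliminated: the corresponding block of $\hat{\mL}$ is then the $1\times 1$ zero matrix and the diagonal entry is $0$, not positive. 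This is an edge case the paper also ignores, but since you were careful enough to flag the invertibility hypothesis on $\mL_{22}$, it is worth excluding this case explicitly as well (at least two retained vertices per linkage class).
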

\begin{proof}
(\emph{1}.) Follows from the proof of \cite[Theorem 3.11]{Niezink}; see also \cite{vdsSCL} for the case of symmetric $\mL$.

\medskip{}

(\emph{2}.) Without loss of generality, assume that the last $c-\hat{c}$ rows and columns of $\mL(x^*)$ correspond to the vertex set $\mathcal{V}_r$. Consider a partition of $\mL(x^*)$ given by
\begin{equation}\label{eq:part}
\mL(x^*)=\begin{bmatrix}
\mL_{11}(x^*) & \mL_{12}(x^*)\\
\mL_{21}(x^*) & \mL_{22}(x^*)
\end{bmatrix}
\end{equation}
where $\mL_{11}(x^*) \in \mR^{\hat{c}\times \hat{c}}$, $\mL_{12}(x^*) \in \mR^{\hat{c} \times (c-\hat{c})}$, $\mL_{21}(x^*)\in \mR^{(c-\hat{c})\times \hat{c}}$ and $\mL_{22}(x^*)\in \mR^{(c-\hat{c})\times(c-\hat{c})}$. By definition,
\[
\hat{\mL}(x^*)=\mL_{11}(x^*)-\mL_{12}(x^*)\mL_{22}(x^*)^{-1}\mL_{21}(x^*)
\]
Since $\mathds{1}_{c}^T\mL(x^*)=0$, we obtain
\begin{eqnarray*}
\mathds{1}_{\hat{c}}^T\mL_{11}(x^*)+\mathds{1}_{c-\hat{c}}^T\mL_{21}(x^*) &=& 0\\
\mathds{1}_{\hat{c}}^T\mL_{12}(x^*)+\mathds{1}_{c-\hat{c}}^T\mL_{22}(x^*) &=& 0
\end{eqnarray*}
Using the above equations, we get
\begin{eqnarray*}
\mathds{1}_{\hat{c}}^T\hat{\mL}(x^*) &=& \mathds{1}_{\hat{c}}^T\big(\mL_{11}(x^*)-\mL_{12}(x^*)\mL_{22}(x^*)^{-1}\mL_{21}(x^*)\big) \\ &=& -\mathds{1}_{c-\hat{c}}^T\mL_{21}(x^*)+\mathds{1}_{c-\hat{c}}^T\mL_{22}(x^*)\mL_{22}(x^*)^{-1}\mL_{21}(x^*)=0
\end{eqnarray*}
In a similar way, it can be proved that $\hat{\mL}(x^*)\mathds{1}_{\hat{c}}=0$.
\end{proof}

From the above result, it follows that $\hat{\mL}(x^*)$ obeys all the properties of the weighted Laplacian of a complex-balanced chemical network corresponding to the graph of complexes with vertex set $\mathcal{V}-\mathcal{V}_r$. Thus Proposition \ref{prop:WL} can be directly applied to the graph of complexes, yielding a reduction of the chemical reaction network by reducing the number of complexes. Consider a complex-balanced reaction network described in the standard form (\ref{eq:standform})
\[
\Sigma: \quad \dot{x} = - Z \mL(x^*) \mathrm{Exp} \left(Z^T \mathrm{Ln}\left(\frac{x}{x^*}\right)\right)
\]
Reduction will be performed by {\it deleting certain complexes in the graph of complexes}, resulting in a reduced graph of complexes with weighted Laplacian $\hat{\mL}(x^*)$. Furthermore, leaving out the corresponding columns of the complex-stoichiometric matrix $Z$ one obtains a reduced complex-stoichiometric matrix $\hat{Z}$ (with as many columns as the remaining number of complexes in the graph of complexes), leading to the reduced reaction network
\begin{equation}\label{reduced}
\hat{\Sigma}: \quad \dot{x} = - \hat{Z} \hat{\mL}(x^*) \Exp \left(\hat{Z}^T \Ln \left(\frac{x}{x^*}\right)\right).
\end{equation}
Note that $\hat{\Sigma}$ is again a {\it complex-balanced chemical reaction network} governed by mass action kinetics, with a reduced number of complexes and with, in general, a different set of reactions (edges of the graph of complexes). Furthermore, the complex-equilibrium $x^*$ of the original reaction network $\Sigma$ is a complex-equilibrium of the reduced network $\hat{\Sigma}$ as well. 

An interpretation of the reduced network $\hat{\Sigma}$ can be given as follows. Consider a subset $\mathcal{V}_r$ of the set of all complexes, which we wish to leave out in the reduced network. Consider the partition of $\mL(x^*)$ as given by equation (\ref{eq:part}) and a corresponding partition of $Z$ given by
\begin{equation}\label{partition}
Z = \begin{bmatrix} Z_1 & Z_2 \end{bmatrix},
\end{equation}
where $\mathcal{V}_r$ corresponds to the last part of the indices (denoted by $2$), in order to write out the dynamics of $\Sigma$ as
\[
\dot{x} = - \begin{bmatrix} Z_1 & Z_2 \end{bmatrix}  \begin{bmatrix} \mL_{11}(x^*) & \mL_{12}(x^*) \\ \mL_{21}(x^*) & \mL_{22}(x^*) \end{bmatrix} 
\begin{bmatrix} \Exp \left( Z_1^T \Ln \left(\frac{x}{x^*}\right)\right) \\ \Exp \left(Z_2^T \Ln \left(\frac{x}{x^*}\right)\right) \end{bmatrix}
\]
Consider now the auxiliary dynamical system
\[
\begin{bmatrix} \dot{y}_1 \\ \dot{y}_2 \end{bmatrix} = - \begin{bmatrix} \mL_{11}(x^*) & \mL_{12}(x^*) \\ \mL_{21}(x^*) & \mL_{22}(x^*) \end{bmatrix} 
\begin{bmatrix} w_1 \\ w_2 \end{bmatrix}
\]
where we impose the constraint $\dot{y}_2 =0$. It follows that 
\[
w_2 = - \mL_{22}(x^*)^{-1}\mL_{21}(x^*)w_1, 
\]
leading to the reduced auxiliary dynamics defined by the Schur complement
\[
\dot{y}_1 = - \big( \mL_{11}(x^*) - \mL_{12}(x^*)\mL_{22}(x^*)^{-1}\mL_{21}(x^*) \big) w_1 = - \hat{\mL}(x^*) w_1
\]
Putting back in $w_1 = \Exp \left(\hat{Z}_1^T \Ln \left(\frac{x}{x^*}\right)\right)$, making use of 
$
\dot{x} = Z_1 \dot{y}_1 + Z_2 \dot{y}_2 = Z_1 \dot{y}_1 = \hat{Z} \dot{y}_1
$, we then obtain the reduced network $\hat{\Sigma}$ given in (\ref{reduced}).

\smallskip 

We derive the following properties relating $\Sigma$ and $\hat{\Sigma}$.

\begin{proposition}\label{prop:reduction}
Consider the complex-balanced reaction network $\Sigma$ and its reduced order model $\hat{\Sigma}$ given by $($\ref{reduced}$)$. Denote their sets of equilibria by $\mathcal{E}$, respectively $\hat{\mathcal{E}}$. Then $\mathcal{E} \subseteq \hat{\mathcal{E}}$. 
\end{proposition}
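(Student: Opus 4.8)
The plan is to show that any equilibrium of the full network $\Sigma$ is automatically an equilibrium of the reduced network $\hat{\Sigma}$, by combining the fact (established in Theorem \ref{th:char}) that every equilibrium of a complex-balanced network is in fact a complex-equilibrium with the defining algebraic identity of the Schur complement. First I would fix $x^{**} \in \mathcal{E}$ and set $\gamma := Z^T \Ln\left(\frac{x^{**}}{x^*}\right) \in \mR^c$. By Theorem \ref{th:char} the point $x^{**}$ is not merely an equilibrium but a complex-equilibrium of $\Sigma$, so that $\mL(x^*)\Exp(\gamma) = 0$; this is precisely the identity $-Bv(x^{**}) = \mL(x^*)\Exp\left(Z^T\Ln\left(\frac{x^{**}}{x^*}\right)\right) = 0$ appearing at the end of the proof of Theorem \ref{th:char}. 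This is the crucial input: it is a statement about the \emph{full} Laplacian $\mL(x^*)$, whereas bare membership in $\mathcal{E}$ only gives $S^T \Ln\left(\frac{x^{**}}{x^*}\right)=0$, which would be awkward to transport to the reduced network, since deleting complexes changes the graph of complexes and hence its incidence matrix $\hat{B}$ and linkage-class structure.

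Next I would partition $\gamma = \begin{bmatrix}\gamma_1^T & \gamma_2^T\end{bmatrix}^T$ conformably with the partition (\ref{eq:part}) of $\mL(x^*)$ and (\ref{partition}) of $Z$, so that the deleted complexes $\mathcal{V}_r$ form block $2$ and $\gamma_1 = \hat{Z}^T \Ln\left(\frac{x^{**}}{x^*}\right)$ with $\hat{Z}=Z_1$. Writing the complex-equilibrium identity $\mL(x^*)\Exp(\gamma)=0$ in block form yields $\mL_{11}(x^*)\Exp(\gamma_1) + \mL_{12}(x^*)\Exp(\gamma_2)=0$ together with $\mL_{21}(x^*)\Exp(\gamma_1) + \mL_{22}(x^*)\Exp(\gamma_2)=0$. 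Since $\mL_{22}(x^*)$ is invertible (this is exactly what makes the Schur complement $\hat{\mL}(x^*)$ well defined, cf.\ Proposition \ref{prop:WL}), the second relation gives $\Exp(\gamma_2) = -\mL_{22}(x^*)^{-1}\mL_{21}(x^*)\Exp(\gamma_1)$, and substituting this into the first collapses it to $\hat{\mL}(x^*)\Exp(\gamma_1)=0$.

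To finish I would premultiply by $\hat{Z}=Z_1$, obtaining $\hat{Z}\hat{\mL}(x^*)\Exp\left(\hat{Z}^T\Ln\left(\frac{x^{**}}{x^*}\right)\right)=0$, which is precisely the vanishing of the right-hand side of the reduced dynamics (\ref{reduced}) at $x^{**}$. Hence $x^{**}\in\hat{\mathcal{E}}$, and since $x^{**}\in\mathcal{E}$ was arbitrary this proves $\mathcal{E}\subseteq\hat{\mathcal{E}}$.

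I do not expect a serious obstacle here: the Schur-complement relation \emph{is} the elimination of the block-$2$ variables $\Exp(\gamma_2)$, and the complex-equilibrium condition is exactly the relation that survives that elimination. The only point requiring care is the invertibility of $\mL_{22}(x^*)$, which I would cite from the construction underlying Proposition \ref{prop:WL} rather than re-establish. Conceptually, the inclusion $\mathcal{E}\subseteq\hat{\mathcal{E}}$ (and the fact that it may be strict) reflects that removing complexes can only relax, never tighten, the kernel conditions that cut out the equilibrium set.
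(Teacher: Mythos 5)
Your proof is correct, but it takes a genuinely different route from the paper's. The paper starts from bare membership $x^{**}\in\mathcal{E}$, i.e.\ $B^TZ^T\Ln\left(\frac{x^{**}}{x^*}\right)=0$, deduces (linkage class by linkage class, using $\ker B^T=\spa\,\mathds{1}$) that $Z^T\Ln\left(\frac{x^{**}}{x^*}\right)$ --- and hence also $\hat Z^T\Ln\left(\frac{x^{**}}{x^*}\right)$, since the columns of $\hat Z$ are a subset of those of $Z$ --- lies in $\spa(\mathds{1})$, and then concludes from the balancedness of the Schur complement ($\hat{\mL}(x^*)\mathds{1}_{\hat c}=0$, part 2 of Proposition \ref{prop:WL}) that the reduced right-hand side vanishes. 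You instead invoke part 2 of Theorem \ref{th:char} to upgrade $x^{**}$ to a complex-equilibrium, i.e.\ $\mL(x^*)\Exp(\gamma)=0$, and then use the block-elimination identity of the Schur complement: if $\Exp(\gamma)\in\ker\mL(x^*)$ and $\mL_{22}(x^*)$ is invertible, then $\Exp(\gamma_1)\in\ker\hat{\mL}(x^*)$. Both arguments are sound and of comparable length. Yours is closer in spirit to the paper's own ``auxiliary dynamical system'' interpretation of the reduction (imposing $\dot y_2=0$ and eliminating $w_2$), and it dispenses with the per-linkage-class bookkeeping; on the other hand it leans on the nontrivial part 2 of Theorem \ref{th:char} and on the invertibility of $\mL_{22}(x^*)$, whereas the paper's argument needs only the definition of $\mathcal{E}$ and the row-sum property of $\hat{\mL}(x^*)$ already recorded in Proposition \ref{prop:WL}. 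Citing the invertibility of $\mL_{22}(x^*)$ rather than re-establishing it is acceptable here, since the paper itself assumes it tacitly (otherwise $\hat{\mL}(x^*)$ is not even defined).
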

\begin{proof}
Let $B$ denote the incidence matrix for $\Sigma$ and let $c$ and $\hat{c}$ denote the number of complexes in $\Sigma$ and $\hat{\Sigma}$ respectively. Assume that the graph of complexes is connected; otherwise the same argument can be repeated for every component (linkage class). It follows that ker$(B^T)=$ span$(\mathds{1}_c)$. Let $x^{**} \in \mathcal{E}$. We show that $x^{**} \in \hat{\mathcal{E}}$. Let $Z$ and $\hat{Z}$ denote the complex-stoichiometric matrices of $\Sigma$ and $\hat{\Sigma}$ respectively. Let $\mL(x^*)$ denote the weighted Laplacian of the graph of complexes of $\Sigma$ corresponding to an equilibrium $x^*$. Let $\hat{\mL}(x^*)$ denote the Schur complement of $\mL(x^*)$ corresponding to the reduced model $\hat{\Sigma}$.

Since $x^{**} \in \mathcal{E}$, $B^TZ^T\Ln\left(\frac{x^{**}}{x^*}\right)=0$. It follows that $Z^T\Ln\left(\frac{x^{**}}{x^*}\right)\in $ span$(\mathds{1}_c)$. This implies that $\hat{Z}^T\Ln\left(\frac{x^{**}}{x^*}\right)\in $ span$(\mathds{1}_{\hat{c}})$ since the columns of $\hat{Z}$ form a subset of the columns of $Z$. From Proposition \ref{prop:WL}, it now follows that $\hat{\mL}(x^*)\Exp\left(\hat{Z}^T\Ln\left(\frac{x^{**}}{x^*}\right)\right)=0$. Consequently $x^{**}\in \hat{\mathcal{E}}$. This concludes the proof.

\end{proof}

\subsection{Effect of Model Reduction}\label{sec:effect}
In this section, we show the effect of our model reduction method on two types of complex-balanced networks with a single linkage class. In other words, we give an interpretation of our reduced model in terms of its corresponding full model for two types of networks. Note that deletion of a set of complexes in one linkage class does not affect the reactions of the other linkage classes of the network. 

\medskip{}

{\bf Type 1:} 
\begin{equation}\label{Type1}
\text{Full Network:} \qquad \mathcal{C}_1 \overset{k_1}{\underset{k_{-1}}{\rightleftharpoons}} \mathcal{C}_2 \overset{k_2}{\underset{k_{-2}}{\rightleftharpoons}} \mathcal{C}_3 \overset{k_3}{\underset{k_{-3}}{\rightleftharpoons}} \cdots \cdots \overset{k_{n-1}}{\underset{k_{-(n-1)}}{\rightleftharpoons}} \mathcal{C}_n
\end{equation}
\begin{equation}\label{Type1red}
\text{Reduced Network:} \qquad \mathcal{C}_1 \overset{\frac{k_1k_2}{k_{-1}+k_2}}{\underset{\frac{k_{-1}k_{-2}}{k_{-1}+k_2}}{\rightleftharpoons}} \mathcal{C}_3 \overset{k_3}{\underset{k_{-3}}{\rightleftharpoons}} \cdots \cdots \overset{k_{n-1}}{\underset{k_{-(n-1)}}{\rightleftharpoons}} \mathcal{C}_n
\end{equation}

These are complex-balanced networks with reversible reactions occuring between consecutive elements of the set of distinct complexes $\{\mathcal{C}_1,\mathcal{C}_2, \ldots, \mathcal{C}_n\}$ as in (\ref{Type1}). The reduced network obtained by deleting the complex $\mathcal{C}_2$ can be computed to be (\ref{Type1red}). The two reactions, $\mathcal{C}_1 \rightleftharpoons \mathcal{C}_2$ and $\mathcal{C}_2 \rightleftharpoons \mathcal{C}_3$ in the full network are replaced by one reaction $\mathcal{C}_1 \rightleftharpoons \mathcal{C}_3$ in the reduced network. This reaction is again a reversible reaction governed by mass action kinetics, with rate constants given by (\ref{Type1red}). 

     The transient behaviour of the metabolites involved in the complexes of the reduced model will approximately be the same as that of the full model if the metabolites involved in $\mathcal{C}_2$ reach their steady states much faster than the remaining metabolites. In this case, we can safely impose the condition that the metabolites involved in $\mathcal{C}_2$ are at constant concentration in order to obtain the reduced model (\ref{Type1red}) with similar transient behaviour as that of (\ref{Type1}). The rule of induction may be applied in order to further reduce the model by deleting more complexes. 

A special case of Type 1 networks is $\mathcal{C}_1 \rightleftharpoons \mathcal{C}_2$. Deletion of the complex $\mathcal{C}_2$ in this case is equivalent to deletion of the linkage class from the network. Such a deletion provides a close approximation to the original network if the reaction has very little effect on the dynamics of the network, i.e., if the reaction reaches its steady state much faster than the remaining reactions of the network.  

\medskip{}

\begin{figure}[ht]
\begin{minipage}[b]{0.45\linewidth}
\centerline{
  \scalebox{0.6}{
     \ifx\JPicScale\undefined\def\JPicScale{1}\fi
\unitlength \JPicScale mm
\begin{picture}(111,24)(0,0)
\put(0,0){\makebox(0,0)[cc]{$\mathcal{C}_0$}}

\linethickness{0.3mm}
\put(6,0){\line(1,0){11}}
\put(17,0){\vector(1,0){0.12}}
\put(20,0){\makebox(0,0)[cc]{$\mathcal{C}_1$}}

\linethickness{0.3mm}
\put(24,0){\line(1,0){12}}
\put(36,0){\vector(1,0){0.12}}
\put(40,0){\makebox(0,0)[cc]{$\mathcal{C}_2$}}

\linethickness{0.3mm}
\put(47,0){\line(1,0){12}}
\put(59,0){\vector(1,0){0.12}}
\put(64,0){\makebox(0,0)[cc]{$\mathcal{C}_3$}}

\linethickness{0.3mm}
\put(70,0){\line(1,0){12}}
\put(82,0){\vector(1,0){0.12}}
\put(87,0){\makebox(0,0)[cc]{$\ldots$}}

\linethickness{0.3mm}
\put(93,0){\line(1,0){12}}
\put(105,0){\vector(1,0){0.12}}
\put(111,0){\makebox(0,0)[cc]{$\mathcal{C}_n$}}

\linethickness{0.3mm}
\qbezier(19,2)(14.32,5.14)(11.07,5.98)
\qbezier(11.07,5.98)(7.82,6.82)(5.5,5.5)
\qbezier(5.5,5.5)(3.15,4.2)(2.07,3.59)
\qbezier(2.07,3.59)(0.99,2.99)(1,3)
\put(9,-3){\makebox(0,0)[cc]{$k_{1}$}}

\put(10,3){\makebox(0,0)[cc]{$k_{-1}$}}

\linethickness{0.3mm}
\multiput(1,3)(0.12,0.25){8}{\line(0,1){0.25}}
\linethickness{0.3mm}
\multiput(1,3)(0.25,-0.12){8}{\line(1,0){0.25}}
\linethickness{0.3mm}
\qbezier(39,2)(29.14,7.24)(20.24,7.72)
\qbezier(20.24,7.72)(11.33,8.21)(2,4)
\linethickness{0.3mm}
\qbezier(63,4)(47.44,10.3)(32.76,10.3)
\qbezier(32.76,10.3)(18.08,10.3)(2,4)
\linethickness{0.3mm}
\qbezier(110,3)(74.11,14.02)(48.12,14.26)
\qbezier(48.12,14.26)(22.14,14.5)(2,4)
\put(26,4){\makebox(0,0)[cc]{$k_{-2}$}}

\put(44,7){\makebox(0,0)[cc]{$k_{-3}$}}

\put(73,10){\makebox(0,0)[cc]{$k_{-n}$}}

\put(31,-3){\makebox(0,0)[cc]{$k_{2}$}}

\put(52,-3){\makebox(0,0)[cc]{$k_{3}$}}

\put(76,-3){\makebox(0,0)[cc]{$k_{4}$}}

\put(98,-3){\makebox(0,0)[cc]{$k_{n}$}}

\end{picture}
     }
   }
\caption{Type 2 full network}
\label{fig:Type3}
\end{minipage}
\hspace{0.5cm}
\begin{minipage}[b]{0.45\linewidth}
\centerline{
  \scalebox{0.6}{
     \ifx\JPicScale\undefined\def\JPicScale{1}\fi
\unitlength \JPicScale mm
\begin{picture}(104,46)(0,0)
\put(0,0){\makebox(0,0)[cc]{$\mathcal{C}_0$}}

\linethickness{0.3mm}
\put(6,0){\line(1,0){25}}
\put(31,0){\vector(1,0){0.12}}
\put(37,0){\makebox(0,0)[cc]{$\mathcal{C}_1$}}

\linethickness{0.3mm}
\put(40,0){\line(1,0){10}}
\put(50,0){\vector(1,0){0.12}}
\put(54,0){\makebox(0,0)[cc]{$\mathcal{C}_3$}}

\linethickness{0.3mm}
\put(59,0){\line(1,0){12}}
\put(71,0){\vector(1,0){0.12}}
\put(79,0){\makebox(0,0)[cc]{$\ldots$}}

\linethickness{0.3mm}
\put(86,0){\line(1,0){12}}
\put(98,0){\vector(1,0){0.12}}
\put(104,0){\makebox(0,0)[cc]{$\mathcal{C}_n$}}

\linethickness{0.3mm}
\qbezier(37,2)(27.63,10.37)(21.14,12.42)
\qbezier(21.14,12.42)(14.64,14.46)(10,10.5)
\qbezier(10,10.5)(5.3,6.59)(3.14,4.78)
\qbezier(3.14,4.78)(0.97,2.98)(1,3)
\put(16,-3){\makebox(0,0)[cc]{$k_{1}$}}

\put(27,14){\makebox(0,0)[cc]{$k_{-3}$}}

\linethickness{0.3mm}
\multiput(1,3)(0.12,0.25){8}{\line(0,1){0.25}}
\linethickness{0.3mm}
\multiput(1,3)(0.25,-0.12){8}{\line(1,0){0.25}}
\linethickness{0.3mm}
\qbezier(53,3)(39.51,17.17)(27.24,17.41)
\qbezier(27.24,17.41)(14.97,17.65)(2,4)
\linethickness{0.3mm}
\qbezier(103,3)(71.29,25.57)(46.99,25.81)
\qbezier(46.99,25.81)(22.68,26.05)(2,4)
\put(18,6){\makebox(0,0)[cc]{$k_{-1}+\frac{k_2k_{-2}}{k_{-2}+k_3}$}}

\put(68,18){\makebox(0,0)[cc]{$k_{-n}$}}

\put(45,-4){\makebox(0,0)[cc]{$\frac{k_2k_3}{k_{-2}+k_3}$}}

\put(65,-3){\makebox(0,0)[cc]{$k_{4}$}}

\put(92,-3){\makebox(0,0)[cc]{$k_{n}$}}

\end{picture}
     }
   }
\caption{Type 2 reduced network}
\label{fig:Type3red}
\end{minipage}
\end{figure}
{\bf Type 2:} These are complex-balanced networks between distinct complexes $\{\mathcal{C}_1,\mathcal{C}_2, \ldots, \mathcal{C}_n\}$ as shown in Figure \ref{fig:Type3}. McKeithan's network is an example of such networks. The reduced network obtained by deleting complex $\mathcal{C}_2$ is shown in Figure \ref{fig:Type3red}. Observe that the reaction $\mathcal{C}_1 \rightarrow \mathcal{C}_0$ in the reduced network has a different rate constant as compared to the full network. The three reactions $\mathcal{C}_1 \rightarrow \mathcal{C}_2$, $\mathcal{C}_2 \rightarrow \mathcal{C}_3$ and $\mathcal{C}_2 \rightarrow \mathcal{C}_0$ of the full network are replaced by one reaction $\mathcal{C}_1 \rightarrow \mathcal{C}_3$ in the reduced network. The rate constant for this reaction is given in Figure \ref{fig:Type3red}. All the remaining reactions of the reduced network occur in the same way as in the full network.

In this case, the transient behaviour of the metabolites involved in the complexes of the reduced model will approximately be the same as that of the full model if the metabolites involved in $\mathcal{C}_2$ reach their steady states much faster than the remaining metabolites. Using the method described in this paper, we can study the effects of deleting other complexes like $\mathcal{C}_1$ or $\mathcal{C}_n$ from the model.
\medskip{}
 
Observe that for all the types of networks discussed above, it is important to determine which of the complexes are to be deleted so that the reduced model approximates the full model reasonably well.

\begin{example}\rm
We have applied the model reduction method described in this paper to the model of T-cell interactions as in (\ref{eq:McK}). We use the following numerical values: $N=19$; 
\[
\begin{array}{ccccc}
k_{p,0}=52; & k_{p,1}=49; & k_{p,2}=41; & k_{p,3}=39; & k_{p,4}=37; \\
k_{p,5}=34; & k_{p,6}=31; & k_{p,7}=29; & k_{p,8}=25; & k_{p,9}=19; \\
k_{p,10}=16; & k_{p,11}=21; & k_{p,12}=20; & k_{p,13}=19; & k_{p,14}=18; \\
k_{p,15}=15; & k_{p,16}=24; & k_{p,17}=13; & k_{p,18}=7; & k_{p,19}=5; \\
k_{-1,0}=13; & k_{-1,1}=29; & k_{-1,2}=0.16; & k_{-1,3}=1.4; & k_{-1,4}=2.3; \\
k_{-1,5}=2; & k_{-1,6}=0.19; & k_{-1,7}=0.33; & k_{-1,8}=0.94; & k_{-1,9}=0.67; \\
k_{-1,10}=0.31; & k_{-1,11}=0.21; & k_{-1,12}=3; & k_{-1,13}=5; & k_{-1,14}=1; \\
k_{-1,15}=11; & k_{-1,16}=0.8; & k_{-1,17}=7; & k_{-1,18}=1; & k_{-1,19}=17.
\end{array}
\]
The initial value of each of the complexes $C_i$, $i=0,\ldots,19$ is assumed to be equal to 0.01. The complexes $T$ and $M$ are assumed to have initial concentrations $1$ and $2$ respectively. We have performed model reduction by deleting 5 complexes $C_{15}, C_{16}, C_{17}, C_{18}$ and $C_{19}$. We have simulated the transient behaviour of the remaining complexes for the first two time units. The simulation results show that there is a good agreement between the transient behaviour of the concentration of most of such complexes when comparing the full network to the reduced network. Figure \ref{fig:graph1} depicts plots of comparison of the concentration profiles of $T$ and $M$. 
\begin{figure}[t]
\begin{center}
\scalebox{0.7}{
\includegraphics[width=\columnwidth]{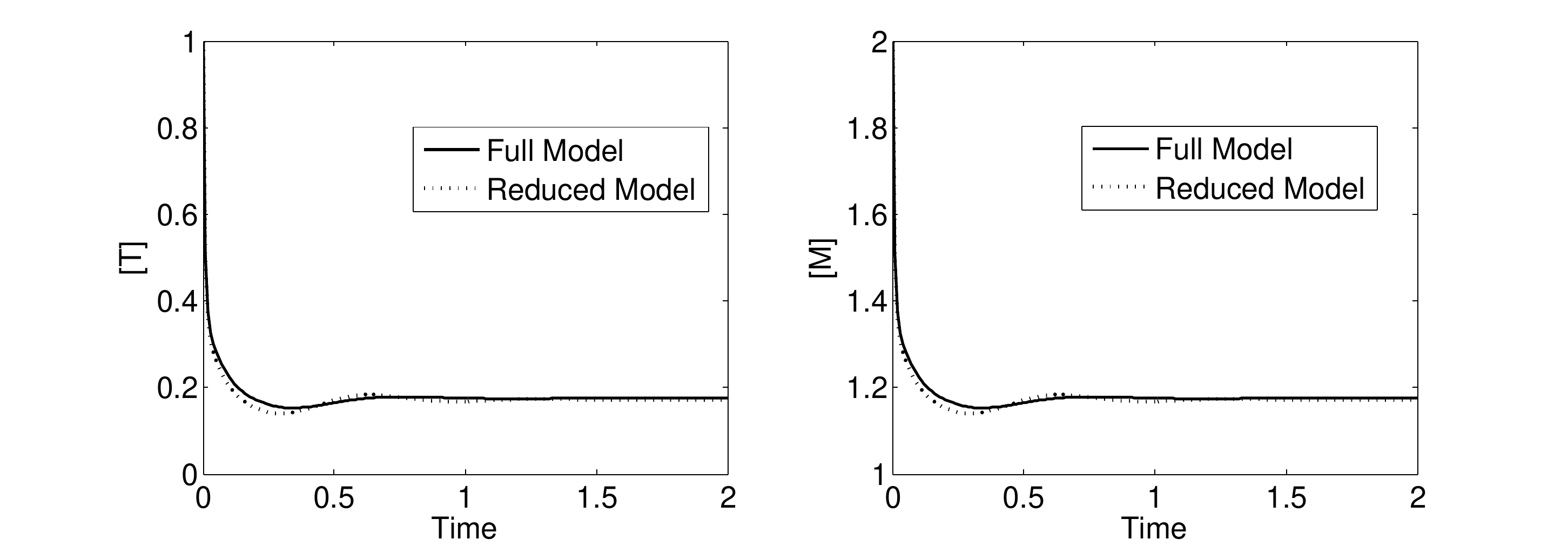}}
\end{center}
\caption{Concentration profiles of $T$ and $M$}
\label{fig:graph1}
\end{figure}

An interpretation of model reduction of the above example model is as follows. By deleting complexes $C_{15}, C_{16}, C_{17}, C_{18}$ and $C_{19}$, we assume that these complexes are at constant concentration. Since deleting these complexes results in a reduced model that closely mimics the original model, it follows that in the full model, these complexes reach an equilibrium much faster than the remaining complexes, so that assuming that these complexes are at constant concentrations results in a close approximation of the original model.
\end{example}

\section{Conclusion}
In this paper, we have provided a compact mathematical formulation for the dynamics of complex-balanced networks. We have made use of this formulation for the determination of equilibria and the asymptotic stability of such networks. The methods that have been employed are very similar to the ones used in \cite{Feinberg1}, but the difference is that our proofs are much more concise than the ones presented in \cite{Feinberg1} due to the use of properties of balanced weighted Laplacian matrices of complex-balanced networks. By proving the forward invariance of the positive orthant with respect to the dynamics of complex-balanced networks, we believe that we have proved the persistence conjecture for complex-balanced networks stated in \cite{GAC}. Furthermore, we have made use of the formulation in order to derive a model reduction technique for complex-balanced networks. 

A main challenge for further research is the extension of our results to chemical reaction networks with external fluxes and/or externally controlled concentrations. This will change the stability analysis considerably, due to the nonlinearity of the differential equations. Furthermore, it will also lead to scrutinizing the model reduction technique proposed in this paper from an external (input-output) point of view.


\end{document}